\begin{document}
\def\E{\ifmmode{\mathbb E}\else{$\mathbb E$}\fi} 
\def\N{\ifmmode{\mathbb N}\else{$\mathbb N$}\fi} 
\def\R{\ifmmode{\mathbb R}\else{$\mathbb R$}\fi} 
\def\Q{\ifmmode{\mathbb Q}\else{$\mathbb Q$}\fi} 
\def\C{\ifmmode{\mathbb C}\else{$\mathbb C$}\fi} 
\def\H{\ifmmode{\mathbb H}\else{$\mathbb H$}\fi} 
\def\Z{\ifmmode{\mathbb Z}\else{$\mathbb Z$}\fi} 
\def\P{\ifmmode{\mathbb P}\else{$\mathbb P$}\fi} 
\def\T{\ifmmode{\mathbb T}\else{$\mathbb T$}\fi} 
\def\SS{\ifmmode{\mathbb S}\else{$\mathbb S$}\fi} 
\def\DD{\ifmmode{\mathbb D}\else{$\mathbb D$}\fi} 

\renewcommand{\a}{\alpha}
\renewcommand{\b}{\beta}
\renewcommand{\d}{\delta}
\newcommand{\D}{\Delta}
\newcommand{\e}{\varepsilon}
\newcommand{\g}{\gamma}
\newcommand{\G}{\Gamma}
\newcommand{\la}{\lambda}
\newcommand{\La}{\Lambda}
\newcommand{\n}{\nabla}
\newcommand{\var}{\varphi}
\newcommand{\s}{\sigma}
\newcommand{\Sig}{\Sigma}
\renewcommand{\t}{\tau}
\renewcommand{\th}{\theta}
\renewcommand{\O}{\Omega}
\renewcommand{\o}{\omega}
\newcommand{\z}{\zeta}

\newcommand{\ben}{\begin{enumerate}}
\newcommand{\een}{\end{enumerate}}
\newcommand{\be}{\begin{equation}}
\newcommand{\ee}{\end{equation}}
\newcommand{\bea}{\begin{eqnarray}}
\newcommand{\eea}{\end{eqnarray}}
\newcommand{\bc}{\begin{center}}
\newcommand{\ec}{\end{center}}

\newtheorem{thm}{Theorem}[section]
\newtheorem{cor}[thm]{Corollary}
\newtheorem{lem}[thm]{Lemma}
\newtheorem{prop}[thm]{Proposition}
\newtheorem{ax}{Axiom}
\newtheorem{conj}[thm]{Conjecture}

\theoremstyle{definition}
\newtheorem{defn}{Definition}[section]

\theoremstyle{remark}
\newtheorem{rem}{\rm\bfseries{Remark}}[section]
\newtheorem*{notation}{Notation}

\newtheorem{ques}{\rm\bfseries{Question}}[section]
\newtheorem{cons}[rem]{\rm\bfseries{Construction}}
\newtheorem{exm}[rem]{\rm\bfseries{Example}}




\hyphenation{ho-mol-o-gous}
\newcommand\commentable[1]{#1}
\newcommand\Rk{\mathrm{rk}}
\newcommand\Id{\mathrm{Id}}
\newcommand{\lbra}{{\em (}}
\newcommand{\rbra}{{\em )}}
\newcommand{\Tors}{\mathrm{Tors}}
\newcommand\sd{\mbox{-}}
\newcommand\std{\xi_{std}}
\newcommand{\HF}{HF}
\newcommand{\OneHalf}{\frac{1}{2}}
\newcommand{\ThreeHalves}{\frac{3}{2}}
\newcommand{\OneQuarter}{\frac{1}{4}}
\newcommand{\CP}[1]{{\mathbb{CP}}^{#1}}
\newcommand{\CPbar}{{\overline{\mathbb{CP}}}^2}
\newcommand{\Zmod}[1]{\Z/{#1}\Z}

\newtheorem{question}[thm]{Question}
\newtheorem{remark}[thm]{Remark}

\newcommand{\Tr}{\mathrm{Tr}}
\newcommand{\Ker}{\mathrm{Ker}}
\newcommand{\CoKer}{\mathrm{Coker}}
\newcommand{\Coker}{\mathrm{Coker}}
\newcommand{\ind}{\mathrm{ind}}
\newcommand{\Image}{\mathrm{Im}}
\newcommand{\Span}{\mathrm{Span}}
\newcommand{\Spec}{\mathrm{Spec}}

\newcommand{\grad}{\vec\nabla}

\newcommand{\cm}{\cdot}


\newcommand\Laurent{\mathbb L}
\newcommand\renEuler{\widehat\chi}
\newcommand\spinccanf{k}
\newcommand\spinccan{\ell}

\newcommand\mCP{{\overline{\mathbb{CP}}}^2}

\newcommand\HFpRed{\HFp_{\red}}
\newcommand\HFpRedEv{\HFp_{\red, \ev}}
\newcommand\HFpRedOdd{\HFp_{\red,\odd}}
\newcommand\HFmRed{\HFm_{\red}}
\newcommand\mix{\mathrm{mix}}

\newcommand\RP[1]{{\mathbb{RP}}^{#1}}

\newcommand\liftGr{\widetilde\gr}
\newcommand\chiTrunc{\chi^{\mathrm{trunc}}}
\newcommand\chiRen{\widehat\chi}
\newcommand\Fred[1]{F^{\mathrm{red}}_{#1}}
\newcommand\SpinC{\mathrm{Spin}^c}
\newcommand{\F}{\mathbb F}
\newcommand\SpinCz{\mathfrak S}
\newcommand\interior{\mathrm{int}}

\newcommand{\ev}{\mathrm{ev}}
\newcommand{\odd}{\mathrm{odd}}
\newcommand\sRelSpinC{\underline\spinc}
\newcommand\RelSpinC{\underline{\SpinC}}
\newcommand\relspinc{\underline{\spinc}}
\newcommand\ThreeCurveComp{\Sigma-\alpha_{1}-\ldots-\alpha_{g}-\beta_{1}-\ldots-\beta_{g}-\gamma_1-...-\gamma_g}
\newcommand\SpanA{{\mathrm{Span}}([\alpha_i]_{i=1}^g)}
\newcommand\SpanB{{\mathrm{Span}}([\beta_i]_{i=1}^g)}
\newcommand\SpanC{{\mathrm{Span}}([\gamma_i]_{i=1}^g)}
\newcommand\Filt{\mathcal F}
\newcommand\HFinfty{\HFinf}
\newcommand\CFinfty{\CFinf}
\newcommand\Tai{{\mathbb T}_{\alpha}^i}
\newcommand\Tbj{{\mathbb T}_{\beta}^j}
\newcommand\RightFp{R^+}
\newcommand\LeftFp{L^+}
\newcommand\RightFinf{R^\infty}
\newcommand\LeftFinf{L^\infty}
\newcommand\Area{\mathcal A}
\newcommand\PhiIn{\phi^{\mathrm{in}}}
\newcommand\PhiOut{\phi^{\mathrm{out}}}
\newcommand\Harm{\mathcal H}
\newcommand\sS{\mathcal S}
\newcommand\sW{\mathcal W}
\newcommand\sX{\mathcal X}
\newcommand\sY{\mathcal Y}
\newcommand\sZ{\mathcal Z}
\newcommand\cent{\mathrm{cent}}

\newcommand\BigO{O}
\newcommand\ModFlowMod{\ModFlow^{*}}
\newcommand\ModCent{\ModFlow^{\mathrm{cent}}
\left({\mathbb S}\longrightarrow \Sym^{g-1}(\Sigma_{1})\times 
\Sym^2(\Sigma_{2})\right)}
\newcommand\ModSphere{\ModFlow\left({\mathbb S}\longrightarrow 
\Sym^{g-1}(\Sigma_{1})\times \Sym^2(\Sigma_{2})\right)}
\newcommand\ModSpheres\ModSphere
\newcommand\CF{CF}
\newcommand\cyl{\mathrm{cyl}}
\newcommand\CFa{\widehat{CF}}
\newcommand\CFp{\CFb}
\newcommand\CFm{\CF^-}
\newcommand\CFleq{\CF^{\leq 0}}
\newcommand\HFleq{\HF^{\leq 0}}
\newcommand\CFmeq{\CFleq}
\newcommand\CFme{\CFleq}
\newcommand\HFme{\HFleq}
\newcommand\HFpred{\HFp_{\rm red}}
\newcommand\HFpEv{\HFp_{\mathrm{ev}}}
\newcommand\HFpOdd{\HFp_{\mathrm{odd}}}
\newcommand\HFmred{\HFm_{\rm red}}
\newcommand\HFred{\HF_{\rm red}}
\newcommand\coHFm{\HF_-}
\newcommand\coHFp{\HF_+}
\newcommand\coHFinf{\HF_\infty}
\newcommand{\red}{\mathrm{red}}
\newcommand\ZFa{\widehat{ZF}}
\newcommand\ZFp{ZF^+}
\newcommand\BFp{BF^+}
\newcommand\BFinf{BF^\infty}
\newcommand\HFp{\HFb}
\newcommand\HFpm{HF^{\pm}}
\newcommand\HFm{\HF^-}
\newcommand\CFinf{CF^\infty}
\newcommand\HFinf{HF^\infty}
\newcommand\CFb{CF^+}
\newcommand\HFa{\widehat{HF}}
\newcommand\HFb{HF^+}
\newcommand\gr{\mathrm{gr}}
\newcommand\Mas{\mu}
\newcommand\UnparModSp{\widehat \ModSp}
\newcommand\UnparModFlow\UnparModSp
\newcommand\Mod\ModSp
\newcommand{\cpl}{{\mathcal C}^+}
\newcommand{\cmi}{{\mathcal C}^-}
\newcommand{\cplm}{{\mathcal C}^\pm}
\newcommand{\cald}{{\mathcal D}}

\newcommand\PD{\mathrm{PD}}
\newcommand\dist{\mathrm{dist}}
\newcommand\Paths{\mathcal{B}}
\newcommand\Met{\mathfrak{Met}}
\newcommand\Lk{\mathrm{Lk}}
\newcommand\Jac{\mathfrak{J}}
\newcommand\spin{\mathfrak s}

\newcommand{\xTuple}{\underline x}
\newcommand{\yTuple}{\underline y}
\newcommand{\aTuple}{\underline a}
\newcommand{\bTuple}{\underline b}
\newcommand{\xiTuple}{\underline \xi}
\newcommand{\spinc}{\mathfrak s}
\newcommand{\spincX}{\mathfrak r}
\newcommand{\spinct}{\mathfrak t}
\newcommand{\Exp}{\mathrm{Exp}}
\newcommand{\etaTuple}{\underline \eta}

\newcommand\Real{\mathrm Re}
\newcommand\brD{F}
\newcommand\brDisk{\brD}
\newcommand\brPhi{\widetilde\phi}
\newcommand\Perm[1]{S_{#1}}
\newcommand\BranchData{\mathrm Br}
\newcommand\ModMaps{\mathcal M}
\newcommand\ModSp\ModMaps
\newcommand\ModMapsComp{\overline\ModMaps}
\newcommand\tsModMaps{\ModMaps^\circ}
\newcommand\ModCurves{\mathfrak M}
\newcommand\ModCurvesComp{\overline\ModCurves}
\newcommand\ModCurvesSmoothInt{\ModCurves_{si}}
\newcommand\ModCurvesSmoothBoundary{\ModCurves_{sb}}
\newcommand\BranchMap{\Phi}
\newcommand\SigmaMap{\Psi}
\newcommand\ProdSig[1]{\Sigma^{\times{#1}}}
\newcommand\ProductForm{\omega_0}
\newcommand\ProdForm{\ProductForm}
\newcommand\Energy{E}

\newcommand\qTup{\mathfrak q}
\newcommand\pTup{\mathfrak p}

\newcommand\Cinfty{C^{\infty}}
\newcommand\Ta{{\mathbb T}_{\alpha}}
\newcommand\Tb{{\mathbb T}_{\beta}}
\newcommand\Tc{{\mathbb T}_{\gamma}}
\newcommand\Td{{\mathbb T}_{\delta}}
\newcommand\Na{N_{\alpha}}
\newcommand\Nb{N_{\beta}}
\newcommand\Torus{\mathbb{T}}
\newcommand\Diff{\mathrm{Diff}}

\newcommand\dbar{\overline\partial}
\newcommand\Map{\mathrm{Map}}
\newcommand\Strip{\mathbb{D}}

\newcommand\mpa{p}
\newcommand\mpb{q}
\newcommand\Mac{M}

\newcommand\del{\partial}

\newcommand\alphas{\mbox{\boldmath$\alpha$}}
\newcommand\mus{\mbox{\boldmath$\mu$}}
\newcommand\xis{\mbox{\boldmath$\xi$}}
\newcommand\etas{\mathbf\eta}
\newcommand\betas{\mbox{\boldmath$\beta$}}
\newcommand\gammas{\mbox{\boldmath$\gamma$}}
\newcommand\deltas{\mbox{\boldmath$\delta$}}
\newcommand\HFto{HF _{{\rm to}} ^{SW}}
\newcommand\CFto{CF _{{\rm to}} ^{SW}}
\newcommand\HFfrom{HF_{{\rm from}} ^{SW}}
\newcommand\CFfrom{CF_{{\rm from}} ^{SW}}

\newcommand\NumPts{n}
\newcommand\Ann{\mathrm{Ann}}

\newcommand\Dom{\mathcal D}
\newcommand\PerClass[1]{{\PerDom}_{#1}}
\newcommand\PerClasses[1]{{\Pi}_{#1}}
\newcommand\PerDom{\mathcal P}
\newcommand\RenPerDom{\mathcal Q}
\newcommand\intPerDom{\mathrm{int}\PerDom}
\newcommand\csum{*}
\newcommand\CurveComp{\Sigma-\alpha_{1}-\ldots-\alpha_{g}-\beta_{1}-\ldots-\beta_{g}}
\newcommand\CompA{\Sigma-\alpha_{1}-\ldots-\alpha_{g}}
\newcommand\CompB{\Sigma-\beta_{1}-\ldots-\beta_{g}}
\newcommand\ta{t_\alpha}
\newcommand\tb{t_\beta}
\newcommand\EmbSurf{Z}
\newcommand\Mult{m}

\newcommand\NumDoms{m}

\newcommand\uCF{\underline\CF}
\newcommand\uCFinf{\uCF^\infty}
\newcommand\uHF{\underline{\HF}}
\newcommand\uHFp{\underline{\HF}^+}
\newcommand\uHFmred{\underline{\HF}^-_{\red}}
\newcommand\uHFpRed{\uHFpred}
\newcommand\uHFpred{\underline{\HF}^+_{\red}}
\newcommand\uCFp{\underline{\CFp}}
\newcommand\uHFm{\underline{\HF}^-}
\newcommand\uHFa{\underline{\HFa}}
\newcommand\uCFa{\underline{\CFa}}
\newcommand\uDel{\underline{\partial}}
\newcommand\uHFinf{\uHF^\infty}

\newcommand\alphaperps{\{\alpha_1^\perp,...,\alpha_g^\perp\}}
\newcommand\betaperps{\{\beta_1^\perp,...,\beta_g^\perp\}}
\newcommand\gammaperps{\{\gamma_1^\perp,...,\gamma_g^\perp\}}
\newcommand\ufinf{\underline{f^\infty_{\alpha,\beta,\gamma}}}
\newcommand\uHFleq{\underline{\HFleq}}
\newcommand\uCFleq{\underline{\CFleq}}
\newcommand\ufp[1]{\underline{f^+_{#1}}}
\newcommand\ufleq[1]{\underline{f^{\leq 0}_{#1}}}
\newcommand\ufa{\underline{{\widehat f}_{\alpha,\beta,\gamma}}}
\newcommand\uFa[1]{\underline{{\widehat F}_{#1}}}
\newcommand\uFm[1]{{\underline{F}}^-_{#1}}
\newcommand\Fstar[1]{F^{*}_{#1}}
\newcommand\uFp[1]{{\underline{F^+}}_{#1}}
\newcommand\uFinf[1]{{\underline F}^\infty_{#1}}
\newcommand\uFleq[1]{{\underline F}^{\leq 0}_{#1}}
\newcommand\uFstar[1]{{\underline F}^{*}_{#1}}
\newcommand\Fm[1]{F^{-}_{#1}}
\newcommand\FmRed[1]{F^{-}_{#1}}
\newcommand\Fp[1]{F^{+}_{#1}}
\newcommand\Fpm[1]{F^{\pm}_{#1}}
\newcommand\Fa[1]{{\widehat F}_{#1}}
\newcommand\Finf[1]{F^{\infty}_{#1}}
\newcommand\Fleq[1]{F^{\leq 0}_{#1}}
\newcommand\Tx{{\mathbb T}_\xi}
\newcommand\Ty{{\mathbb T}_\eta}
\newcommand\Ring{\mathbb A}
\newcommand\HFstar{\HF^*}
\newcommand\uLeftFinf{{\underline L}^\infty}
\newcommand\uRightFinf{{\underline R}^\infty}
\newcommand\orient{\mathfrak o}


\newcommand\essarc{\boldmath{E}}
\newcommand\essarcP{\essarc_+}
\newcommand\essarcM{\essarc_-}

\newcommand\MarkPt{p}

\newcommand\CFKlgeq{CFK^{<,\geq}}
\newcommand\spincrel\relspinc
\newcommand\relspinct{\underline{\mathfrak t}}

\newcommand\CFKsa{CFK^0}
\newcommand\bCFKsa{^bCFK^0}
\newcommand\spincf{\mathfrak r}

\newcommand\Mass{\overline n}
\newcommand\Vertices{\mathrm{Vert}}

\newcommand\CFB{CFB}
\newcommand\CFBinf{\CFB^\infty}
\newcommand\Bouq{\mathbb B}
\newcommand\zees{\mathbf z}

\newcommand\MCone{M}
\newcommand\bCFKp{{}^b\CFKp}
\newcommand\bCFKm{{}^b\CFKm}
\newcommand\bPhip{{}^b\Phi^+}
\newcommand\bPhim{{}^b\Phi^-}
\newcommand\Phip{\Phi^+}
\newcommand\Phim{\Phi^-}
\newcommand\bCFKa{{}^b\CFKa}
\newcommand\CFK{CFK}
\newcommand\HFK{HFK}

\newcommand\CFKc{\CFK^\circ}
\newcommand\CFKp{\CFK^+}
\newcommand\CFKa{\widehat\CFK}
\newcommand\CFKm{\CFK^-}
\newcommand\CFKinf{\CFK^{\infty}}
\newcommand\HFKp{\HFK^+}
\newcommand\bHFKp{{}^b\HFK^+}
\newcommand\bHFKm{{}^b\HFK^-}
\newcommand\HFKa{\widehat\HFK}
\newcommand\HFKm{\HFK^-}
\newcommand\HFKinf{\HFK^{\infty}}
\newcommand\HFKc{\HFK^\circ}

\newcommand\Mark{m}
\newcommand\Marks{\mathbf{m}}

\newcommand\BasePt{w}
\newcommand\FiltPt{z}

\newcommand\StartPt{w}
\newcommand\EndPt{z}
\newcommand\StartPts{\mathbf{w}}
\newcommand\EndPts{\mathbf{z}}

\newcommand\is{\mathbf{i}}
\newcommand\js{\mathbf{j}}

\newcommand\bPsip{{}^b\Psi^+}
\newcommand\bPsia{{}^b{\widehat\Psi}}
\newcommand\bPsim{{}^b\Psi^-}
\newcommand\Psip{\Psi^+}
\newcommand\Psim{\Psi^-}

\newcommand{\divis}{\mathfrak d}
\newcommand\Dehn{D}
\newcommand\PiRed{\Pi^\red}

\newcommand\liftalpha{\widetilde\alpha}
\newcommand\liftbeta{\widetilde\beta}
\newcommand\sj{\mathfrak j}
\newcommand\MT{t}
\newcommand\Jacobi[2]{\left(\begin{array}{c} #1 \\ #2 \end{array}\right)}
\newcommand\Cobord{{\mathfrak C}(3)}
\newcommand\KOver{K_+}
\newcommand\KUnder{K_-}
\newcommand\Fiber{F}
\newcommand\prefspinc{\mathfrak u}
\newcommand\spincu{\mathfrak u}
\newcommand\SpinCCobord{\theta^c}
\newcommand\Field{\mathbb F}
\newcommand\Fmix[1]{F^{\mix}_{#1}}
\newcommand\ugc[1]{{\underline g}^\circ_{#1}}
\newcommand\uGc[1]{{\underline G}^\circ_{#1}}
\newcommand\uginf[1]{{\underline g}^\infty_{#1}}
\newcommand\uGinf[1]{{\underline G}^\infty_{#1}}
\newcommand\uec[1]{{\underline e}^\circ_{#1}}
\newcommand\uEc[1]{{\underline E}^\circ_{#1}}
\newcommand\Ec[1]{{E}^\circ_{#1}}
\newcommand\Gc[1]{{G}^\circ_{#1}}
\newcommand\ueinf[1]{{\underline e}^\infty_{#1}}
\newcommand\uEinf[1]{{\underline E}^\infty_{#1}}
\newcommand\rspinc{\underline\spinc}
\newcommand\Fc{F^\circ}
\newcommand\fc{f^\circ}
\newcommand\uFc{{\underline F}^\circ}
\newcommand\ufc{{\underline f}^\circ}
\newcommand\uF{{\underline F}^\circ}
\newcommand\uHFc{\uHF^\circ}
\newcommand\uCFc{\uCF^\circ}
\newcommand\HFc{\HF^\circ}
\newcommand\CFc{\CF^\circ}

\newcommand\Dual{\mathcal D}
\newcommand\Duality\Dual
\newcommand\aPrBd{\partail_{\widetilde \alpha}}
\newcommand\bPrBd{\partail_{\widetilde \beta}}
\newcommand\cPrBd{\partail_{\widetilde \gamma}}

\newcommand\liftTa{{\widetilde{\mathbb T}}_\alpha}
\newcommand\liftTb{{\widetilde{\mathbb T}}_\beta}
\newcommand\liftTc{{\widetilde{\mathbb T}}_\gamma}
\newcommand\liftTd{{\widetilde{\mathbb T}}_\delta}
\newcommand\liftTcPr{{\widetilde{\mathbb T}}_\gamma'}

\newcommand\Tor{\mathrm{Tor}}
\newcommand\TaPr{\Ta'}
\newcommand\TbPr{\Tb'}
\newcommand\TcPr{\Tc'}
\newcommand\TdPr{\Td'}
\newcommand\alphaprs{\alphas'}
\newcommand\betaprs{\betas'}
\newcommand\gammaprs{\gammas'}
\newcommand\Spider{\sigma}
\newcommand\EulerMeasure{\widehat\chi}

\newcommand\RelInv{F}

\newcommand\Knot{\mathbb K}
\newcommand\Link{\mathbb L}
\newcommand\SubLink{\mathbb K}
\newcommand\SublinkC{{\SubLink}'}
\newcommand\liftSym{{\widetilde \Sym}^g(\Sigma)_\xi}
\newcommand\liftCFinf{\widetilde{CF}^\infty}
\newcommand\liftz{\widetilde z}
\newcommand\liftCFp{\widetilde{CF}^+}
\newcommand\liftCF{\widetilde{CF}}
\newcommand\liftHFinf{\widetilde{HF}^\infty}
\newcommand\liftHFp{\widetilde{HF}^+}
\newcommand\liftx{\widetilde\x}
\newcommand\lifty{\widetilde\y}
\newcommand\liftSigma{\widetilde\Sigma}
\newcommand\liftalphas{\widetilde \alphas}
\newcommand\liftbetas{\widetilde \betas}
\newcommand\liftgammas{{\widetilde\gammas}}
\newcommand\liftdeltas{{\widetilde\deltas}}
\newcommand\liftgammasPr{\liftgammas'}
\newcommand\liftGamma{\widetilde{\Gamma}}
\newcommand\liftdeltasPr{\liftdeltas'}
\newcommand\Tz{{\widetilde{\mathbb T}}_\zeta}
\newcommand\liftTheta{\widetilde\Theta}

\newcommand\liftF[1]{{\widetilde F}_{#1}}
\newcommand\spinck{\mathfrak k}

\newcommand\FiltY{\Filt_\spinc}

\newcommand\Pos{\mathcal{P}}

\newcommand\ons{Ozsv{\'a}th and Szab{\'o}}
\newcommand\os{{Ozsv{\'a}th-Szab{\'o}}}

\setcounter{page}{1}
\volume{13}

\title[{Some remarks on  cabling, contact structures, and complex curves}] 
{Some remarks on cabling, contact structures, and complex curves}

\author[Matthew Hedden]{Matthew Hedden}
\address{Department of
Mathematics, Massachusetts Institute of Technology, MA}
\email{mhedden@math.mit.edu}

\begin{abstract}
We determine the relationship between the contact structure induced by a fibered knot, $K\subset S^3$, and the contact structures induced by its various cables.  Understanding this relationship allows us to classify  fibered cable knots which bound a properly embedded complex curve in the four-ball satisfying a genus constraint. This generalizes the well-known classification of links of plane curve singularities.
\end{abstract}
\keywords{contact structure, fibered knot, cable knot, complex curve, link of singularity}

\maketitle
\section{Introduction}
A well-known construction of Thurston and Winkelnkemper \cite{Thurston} associates a contact structure to an open book decomposition of a three-manifold.  This allows us to talk about {\em the contact structure associated to a fibered knot}.  Here, a fibered knot is a pair, $(F,K)\subset Y$, such that $Y-K$ admits the structure of a fiber bundle over the circle with fibers isotopic to $F$ and $\partial F=K$.  We denote the contact structure associate to a fibered knot by $\xi_{F,K}$ or, when the fiber surface is unambiguous, by $\xi_{K}$.

Thus any operation on knots (or Seifert surfaces) which preserves the property of fiberedness induces an operation on contact structures.  For instance, one can consider the Murasugi sum operation on surfaces-with-boundary (see \cite{Gabai} for definition).  In this case, a result of Stallings  \cite{Stallings} indicates that the Murasugi sum of two fiber surfaces is also fibered (a converse to this was proved by Gabai \cite{Gabai}). The effect on contact structures is given by a result of Torisu:
\begin{thm}{\em(}Theorem $1.3$ of \cite{Torisu}.{\em)} Let $(F_1,\partial F_1)\subset Y_1$ $(F_2,\partial F_2)\subset Y_2$ be two fiber surfaces, and let $(F_1*F_2, \partial (F_1*F_2))\subset Y_1\#Y_2$ denote any Murasugi sum.  Then $$ \xi_{F_1*F_2, \partial (F_1*F_2)}\simeq \xi_{F_1,\partial F_1}\# \xi_{F_2,\partial F_2},$$
\noindent where the right-hand side denotes the connected sum of contact structures.  
\end{thm}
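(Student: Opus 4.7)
The plan is to reduce the statement to a uniqueness theorem for contact structures compatible with an open book. Recall that a contact structure $\xi$ on $Y$ is \emph{supported by} an open book $(F,K)$ if the binding $K$ is a positively transverse link and the contact form restricts positively to each page. Torisu establishes (as a preliminary step, and this is also a consequence of the later Giroux correspondence) that any two supported contact structures for a fixed open book are isotopic. Granting this, it suffices to exhibit a contact structure on $Y_1 \# Y_2$ that is simultaneously isotopic to $\xi_{F_1,\partial F_1} \# \xi_{F_2,\partial F_2}$ and supported by $(F_1 * F_2,\partial(F_1 * F_2))$.

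For the construction, the Murasugi sum is local: it takes place in a $2n$-gon $P_i \subset F_i$, and the two $2n$-gons are glued along the connect-sum sphere. First I would select, for each $i$, a Darboux ball $B_i \subset Y_i$ centered on an interior point of $P_i$ and standardize the Thurston--Winkelnkemper form near the page so that $B_i \cap F_i$ is a convex $2n$-gon sitting inside $B_i$ in a fixed local model, with the arcs of $\partial P_i$ in $\partial F_i$ appearing as Legendrian arcs of the binding crossing $\partial B_i$. Because $P_i$ is contractible, such a standardization is achievable. Next I would perform the contact connected sum by excising the Darboux balls and identifying $\partial B_1$ with $\partial B_2$ in a way that identifies the two $2n$-gons; the output is by definition in the isotopy class $\xi_{F_1,\partial F_1} \# \xi_{F_2,\partial F_2}$, and the glued page surface is precisely $F_1 * F_2$.

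It then remains to verify that the open book $(F_1 * F_2,\partial(F_1 * F_2))$ is supported by this glued contact structure. Away from the connect-sum neck this is inherited from the compatibility of each $\xi_{F_i,\partial F_i}$ with its own open book, and across the neck it reduces to checking the standardized local model in $B_i$. I expect this final step to be the main obstacle: one must arrange the two Darboux models so that all the convex data (the dividing sets on $\partial B_i$, the Legendrian arcs of the binding passing through the ball, and orientations of the pages) match coherently under the gluing, and one must show that the auxiliary choices entering the definition of a Murasugi sum can always be promoted to such a coherent contact gluing without altering the supported contact structure up to isotopy. Once this local convex-surface analysis is in place, Torisu's uniqueness theorem closes the argument.
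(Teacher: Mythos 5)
The paper does not prove this statement at all: it is quoted verbatim as Theorem 1.3 of \cite{Torisu}, so there is no internal proof to compare against. Measured against Torisu's actual argument, your outline follows the same strategy he uses: establish (or invoke) that an open book supports a unique compatible contact structure up to isotopy, then produce one contact structure on $Y_1\# Y_2$ that is visibly both a connected sum $\xi_{F_1,\partial F_1}\#\xi_{F_2,\partial F_2}$ and compatible with the plumbed open book $(F_1*F_2,\partial(F_1*F_2))$. So the skeleton of the proposal is the right one.

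The genuine gap is the step you yourself flag as ``the main obstacle'' and then do not carry out: the verification that the glued structure is supported by the summed open book across the neck, together with the claim that the Darboux balls can be standardized so that the $2n$-gons, the binding arcs, and the dividing/convexity data match under the identification. This is not a routine check --- it is where essentially all of the content of Torisu's proof lives. In particular, the summing region is a $2n$-gon in a separating $2$-sphere $S\subset Y_1\# Y_2$ meeting the page in that polygon and the binding in $2n$ points, and one must arrange $S$ to be convex with a single dividing curve before the contact connected sum decomposition (and hence the identification of the two sides with $(Y_i,\xi_{F_i,\partial F_i})$) is available; one also needs a construction, in the spirit of Thurston--Winkelnkemper but adapted to the Murasugi sum, exhibiting a compatible form that restricts correctly to each factor. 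Your proposal asserts that such a coherent standardization ``is achievable'' because $P_i$ is contractible, but contractibility alone does not control the characteristic foliation on $S$, the position of the $2n$ binding points relative to the dividing set, or the orientations of the pages, and without pinning these down the final appeal to the uniqueness theorem does not yet close. As written, the proposal is a correct plan whose decisive lemma is left unproved.
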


Another operation which preserves fiberedness is cabling, whose definition we now recall.   For a knot $K\subset Y$,  there is an identification of a tubular neighborhood of $K$ with a solid torus, $S^1\times D^2$, and a {\em cable knot} is the image of a torus knot living on $\partial (S^1\times D^2)$ under this identification. Note that the identification depends on a choice of longitude for $K$, and we choose the canonical longitude coming from a Seifert surface for K.  We denote the $(p,q)$ cable of $K$ by $K_{p,q}$, with $q$ indicating the linking number of $K_{p,q}$ with $K$ i.e. the number of times the torus knot wraps around the meridian of the tubular neighborhood of $K$.  Thus $U_{p,q}=T_{p,q}$ where $U$ is the unknot and $T_{p,q}$ is the $(p,q)$ torus knot.  Throughout we will consider knots, and hence $p$ and $q$ will be relatively prime.  We also restrict to the case $p>0$.  There is no loss of information in doing this, since $K_{p,q}=K_{-p,-q}$ as unoriented knots, and none of our arguments are sensitive to orientation.   

In the case now that $K$ is fibered, $K_{p,q}$ will also be fibered.   This follows, for instance, from \cite{Stallings} though the fibration of $Y-K_{p,q}$ can easily be visualized.   Thus it makes sense to ask how the contact structure associated to a fibered knot changes, if at all, upon cabling.   The purpose of this note is to  answer this question for the case of knots in $S^3$.  To state the result, recall that contact structures on $S^3$ are in bijection with the integers plus a distinguished element \cite{Eliashberg,Eliashberg2}:
 $$\{ \text{Isotopy\ classes\ of\ contact\ structures\ on\ }S^3\} \leftrightarrow \Z \cup \{ \bullet\}.$$
\noindent  The contact structure corresponding to the distinguished element, which we denote $\std$, is the unique tight contact structure.  It is obtained as the field of complex lines tangent to the three-sphere, viewed as the boundary of the Stein four-ball in $\C^2$.  The other, overtwisted, contact structures correspond to homotopy classes of two-plane fields on $S^3$ \cite{Eliashberg}, and are determined uniquely by their Hopf invariant, $h(\xi)\in \Z$.  We let $\xi_i$ denote the overtwisted contact structure with $h(\xi_i)=i$.  In these terms, we have:
\begin{thm} \label{thm:main}  Let $K\subset S^3$ be a fibered knot of Seifert genus $g$, and let $K_{p,q}$ denote its $(p,q)$ cable. Then for any $p>0$, we have 
\begin{enumerate} 
\item If $q>0$, then $\xi_{K_{p,q} }\simeq \xi_K.$
\item If $q<0$, then $\xi_{K_{p,q}} \simeq \xi_K\# \xi_{(1-p)(2g-q-1)}$
\end{enumerate}
\noindent In particular,  $\xi_{K_{p,q}}\simeq \std$ if and only if $\xi_K\simeq \std$ and $q>0$.

\end{thm}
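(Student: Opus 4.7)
The plan is to apply Torisu's theorem (Theorem 1.1) to an explicit Hopf-band plumbing decomposition of the cable fiber surface $F_{K_{p,q}}$. First, I would write $F_{K_{p,q}}$ as an iterated Murasugi sum of $F_K$ with a sequence of Hopf bands. An Euler characteristic count fixes the number of bands: using $\chi(F_K) = 1 - 2g$ and $\chi(F_{K_{p,q}}) = 1 - 2pg - (p-1)(|q|-1)$, and noting that each Hopf-band plumbing decreases $\chi$ by one, we need exactly
\[
n := (p-1)(2g + |q| - 1)
\]
plumbings. The crux is to produce such a plumbing geometrically, starting from the standard visualization of $F_{K_{p,q}}$ as $p$ parallel pushoffs of $F_K$ in $S^3\setminus N(K)$ glued along $p$ longitudes of $\partial N(K)$ to the cabling surface inside $N(K)$, and to show that all $n$ Hopf bands carry sign $\mathrm{sign}(q)$: positive when $q>0$, negative when $q<0$.

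Granting this plumbing decomposition, Torisu's theorem translates each Hopf-band plumbing into a contact connect sum. A positive Hopf band is the standard open book for the positive Hopf link with associated contact structure $\std$, while a negative Hopf band is the open book for the negative Hopf link, whose contact structure is the overtwisted $\xi_{-1}$. For $q>0$ all $n$ plumbings are positive, so
\[
\xi_{K_{p,q}} \simeq \xi_K \# \std \# \cdots \# \std \simeq \xi_K,
\]
which gives (1). For $q<0$ all $n$ plumbings are negative, and using additivity of the Hopf invariant under connect sum of contact structures on $S^3$,
\[
\xi_{K_{p,q}} \simeq \xi_K \# \xi_{-1}^{\# n} \simeq \xi_K \# \xi_{-n} \simeq \xi_K \# \xi_{(1-p)(2g-q-1)},
\]
since $-n = -(p-1)(2g-q-1) = (1-p)(2g-q-1)$. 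This gives (2).

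The final tightness claim is then immediate from the classification of contact structures on $S^3$: $\std$ is the unique tight one and serves as the identity for connect sum, while any connect sum involving an overtwisted factor is overtwisted. Hence $\xi_{K_{p,q}} \simeq \std$ requires $\xi_K \simeq \std$ together with a trivial overtwisted contribution from the cable, and combining (1) and (2) this happens exactly when $\xi_K \simeq \std$ and $q>0$.

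The principal obstacle is the very first step: constructing the explicit plumbing decomposition and verifying that all $n$ Hopf bands have sign $\mathrm{sign}(q)$. When $K$ is the unknot the decomposition is classical (the Milnor fiber of the positive or negative torus knot singularity is an iterated plumbing of Hopf bands of the appropriate sign). For a general fibered knot $K$ one must handle the extra $2g(p-1)$ bands coming from the interaction between the Seifert surface $F_K$ and the cabling pattern, and check that these additional plumbings also inherit the sign of $q$ rather than introducing any bands of the opposite sign.
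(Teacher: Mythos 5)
Your reduction machinery is fine (Torisu plus Eliashberg's classification, additivity of the Hopf invariant, the Euler characteristic count giving $n=(p-1)(2g+|q|-1)$ bands), but the entire weight of the argument rests on the step you yourself flag as the ``principal obstacle,'' and that step is not just unproved --- it is false in general. You need $F_{K_{p,q}}$ to be an iterated Hopf-band plumbing on $F_K$; in particular the last stage would exhibit a Hopf band that can be deplumbed from $F_{K_{p,q}}$. The paper's remark following Corollary \ref{cor:complex} gives a counterexample to exactly this: for the $(2,1)$ cable of the right-handed trefoil (so $p=2$, $q=1$, $g=1$, where your count demands two positive Hopf-band plumbings onto the trefoil fiber), the fiber surface admits no Hopf band deplumbing at all, by Melvin--Morton. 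So no amount of care with the ``extra $2g(p-1)$ bands'' will produce the decomposition; cabling is simply not a composition of Hopf plumbings, and this is precisely why the answer to Baader--Ishikawa's question is what it is.

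The part of your argument that does survive is the reduction from general $q$ to $q=\pm1$: by Neumann--Rudolph (Proposition \ref{prop:msum}), $K_{p,q}$ is a Murasugi sum of $K_{p,\mathrm{sgn}(q)}$ with $T_{p,q}$, the torus knot fibers \emph{are} Hopf plumbings of the appropriate sign, and Torisu then gives $\xi_{K_{p,q}}\simeq\xi_{K_{p,\mathrm{sgn}(q)}}\#\xi_{T_{p,q}}$; this is how the paper proceeds as well. What cannot be reached this way is the comparison of $\xi_{K_{p,\pm1}}$ with $\xi_K$, which is the real content of the theorem. The paper supplies it with Floer-theoretic input: Proposition \ref{prop:largen} identifies the top knot Floer group of large positive cables with that of $K$, which via Proposition \ref{prop:hopf}, Corollary \ref{cor:hopfmirror} and the Murasugi sum decomposition pins down the Hopf invariants (Theorem \ref{thm:cablehopf}); and tightness of $\xi_{K_{p,1}}$ versus overtwistedness of $\xi_{K_{p,-1}}$ is decided by the $\tau$ computation of Theorem \ref{thm:tau} together with Proposition \ref{prop:SQPfiber} (plus the crossing-change monotonicity of $\tau$). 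If you want a purely topological route you would need something like convex surface theory, as the paper notes; the Hopf-plumbing shortcut is blocked.
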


Cable knots play a prominent role in the classification of plane curve singularities, where certain iterated torus knots naturally arise (an iterated torus knot is an iterated cable of the unknot).  To explain this, let $f(z,w)\in \C[z,w]$ be a complex polynomial with isolated singularity at the origin $(0,0)\in \C^2$.  Recall that the {\em link of the singularity of $f(z,w)$} is defined to be the knot or link, $K_f$, which comes from the intersection 
 $$ K_f= \{ (z,w)\in \C^2 | f(z,w)=0 \} \cap S^3_\epsilon,$$
 
\noindent where $S^3_\epsilon = \{(z,w)\in \C^2| |z|^2+|w|^2=\epsilon \}$ is the unit sphere of radius $\epsilon$ in $\C^2$.
It is straightforward to show  that the isotopy type of $K_f$ as a knot or link is well-defined, provided $\epsilon$ is sufficiently small.  The classification of links of plane curve singularities shows that the only knots which arise in this way are iterated cables of the unknot whose cabling parameters satisfy a positivity condition (see \cite{Eisenbud} for a discussion).  For example, the $(p',q')$ cable of the $(p,q)$ torus knot is the link of a singularity precisely when $q'\ge p'pq+1$.   

An important property of links of singularities is that they bound smooth complex curves of genus equal to their Seifert genus.  Indeed, a small perturbation of $f$ produces a smooth complex curve which can be deformed into the three-sphere without singularities.  This surface is a Seifert surface for $K_f$ which can be seen to be a fiber in a fibration of $S^3-K_f$ (see \cite{Milnor}  for more details).   

While the iterated torus knots which arise as links of singularities are classified, one can relax the condition that the radius of the three-sphere $S^3_\epsilon$ be sufficiently small. In this case, the four-ball may contain multiple singular points of a given complex curve.  The general case is handled by fixing the radius of the three-sphere and asking:

\begin{question} Which iterated torus knots arise as the intersection of a complex curve  $V_f\subset \C^2$ with $S^3_1$, the unit radius three-sphere? Of these, which among them have a Seifert surface which can be isotoped into the four-ball to 
the piece of the curve, $V_f\cap B^4$?
\end{question}

Drawing on our previous work \cite{SQPfiber}, Theorem \ref{thm:main} answers the second part of the question as a corollary.

\begin{cor} \label{cor:complex}
Let $p>0$, and let $K$ be a fibered knot. Then $K_{p,q}$ has a Seifert surface which is isotopic to a piece of a complex curve $V_f \cap B^4$ if and only if
\begin{itemize}
\item $K$ has a  Seifert surface which is isotopic to a piece of a complex curve {\em and}
\item $q>0$
\end{itemize}
\noindent In particular, the fiber surface of an iterated torus knot is isotopic to a piece of a complex curve if and only if all the cabling coefficients are positive.
\end{cor}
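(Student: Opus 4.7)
The plan is to reduce the corollary to Theorem \ref{thm:main} via the characterization proved in \cite{SQPfiber}: the fiber surface of a fibered knot $K\subset S^3$ is smoothly isotopic rel boundary to $V_f\cap B^4$ for some complex curve $V_f\subset \C^2$ if and only if the contact structure $\xi_K$ induced by the open book is the standard tight contact structure $\std$. Once this equivalence is on the table, the corollary becomes a direct reformulation of the last sentence of Theorem \ref{thm:main}.

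Concretely, I would first translate the hypothesis by the equivalence from \cite{SQPfiber}, replacing ``has a Seifert surface isotopic to a piece of a complex curve'' by ``the induced contact structure is $\std$.'' Then I would feed this into Theorem \ref{thm:main}. In case $(1)$, $q>0$, we have $\xi_{K_{p,q}}\simeq \xi_K$, so $\xi_{K_{p,q}}\simeq \std$ precisely when $\xi_K\simeq \std$. In case $(2)$, $q<0$, the theorem gives $\xi_{K_{p,q}}\simeq \xi_K\#\xi_{(1-p)(2g-q-1)}$; since $(1-p)(2g-q-1)$ labels an overtwisted summand (the exponent indexes the nontrivial overtwisted classes on $S^3$, and with $p\geq 1$ and $q<0$ this integer is well-defined), the connected sum is overtwisted, hence is never isotopic to $\std$. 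Combined with $p>0$, this yields the required biconditional.

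For the ``In particular'' clause about iterated torus knots, I would induct on the number of cabling operations. The base case is the unknot $U$ whose fiber is the obvious disk sitting inside $\{w=0\}\cap B^4$, and correspondingly $\xi_U\simeq \std$. If $K$ is built by cabling operations with coefficients $(p_1,q_1),\dots,(p_n,q_n)$ (each $p_i>0$), applying the first part of the corollary once shows that the outermost cable has fiber isotopic to a piece of a complex curve iff the inner iterated cable does \emph{and} $q_n>0$. Unwinding gives the condition that every $q_i$ be positive.

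The only nonformal ingredient is the equivalence from \cite{SQPfiber} quoted at the outset, and this is indeed where all the real work lives. Its ``only if'' direction invokes the Boileau--Orevkov theorem that complex curves in $B^4$ bound quasipositive braids, together with the fact that quasipositive fiber surfaces induce the standard contact structure; its ``if'' direction uses the explicit construction of complex realizations for fiber surfaces compatible with $\std$. Given this input, the corollary is a formal consequence of Theorem \ref{thm:main}, and no further geometric argument is needed.
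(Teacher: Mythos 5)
Your proposal is correct and follows essentially the same route as the paper: it translates ``Seifert surface isotopic to a piece of a complex curve'' into $\xi_K\simeq\std$ via Proposition~\ref{prop:SQPfiber} and then applies the tightness classification for cables, which is exactly the ``in particular'' clause of Theorem~\ref{thm:main} (i.e.\ Theorem~\ref{thm:tightness}), with the iterated-torus-knot statement handled by the obvious induction. No substantive difference from the paper's argument.
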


\begin{remark}  The corollary answers a question of  Baader and Ishikawa (Question $6.10$ of \cite{Baader}).  They asked whether there exists a quasipositive fiber surface, other than a disk, from which we cannot deplumb a Hopf band (see Figure \ref{fig:SQPcable} and the associated discussion in Section \ref{sec:proof} for relevant definitions).  The answer is yes, and an example is provided by the $(2,1)$ cable of the right-handed trefoil.  The proof of the corollary shows that  it bounds a quasipositive fiber surface, but it does not deplumb a Hopf band by work of Melvin and Morton \cite{Melvin}.
\end{remark}

We prove Theorem \ref{thm:main} and Corollary \ref{cor:complex} in the next section.  Theorem \ref{thm:main} will be proved by examining the knot Floer homology invariants of cable knots, which we studied in \cite{Cable,CableII}.  For fibered knots in the three-sphere, these invariants completely determine the contact structure induced by the knot.   Corollary \ref{cor:complex} will follow from a result in \cite{SQPfiber} which connects a particular Floer invariant or, equivalently, the tightness of the contact structure induced by $K$, with complex curves.  

While Theorem \ref{thm:main} cannot be extended in general using the techniques of the present paper, it can be done in the case of three-manifolds, $Y^3$, for which the \os \ contact invariant classifies contact structures on $Y^3$ (e.g. lens spaces).  However, it seems reasonable to expect that the theorem holds for arbitrary three-manifolds.   That is, positive cabling should not change the contact structure associated to a fibered knot, while negative cabling has the effect of taking the connected sum with $(S^3,\xi_{(1-p)(2g-q-1)})$.  Indeed, this could probably be proved with the rudiments of convex surface theory.  We content ourselves here with the case of $S^3$ since the results follow easily from our previous work on cables. Moreover, our proof serves to highlight the rich geometric content of the knot Floer homology invariants.  In particular, we found it surprising that knot Floer homology could be useful in a result such as Corollary \ref{cor:complex}.  

\bigskip

\noindent {\bf Acknowledgment:} This note draws on several aspects of a talk I gave  at the 2007 G{\"o}kova Geometry and Topology conference.  I wish to thank the organizers of the conference for their invitation to speak and attend, and the  NSF and IMU for providing funding.  In addition, I thank John Etnyre for his interest in this work.   Partial funding was provided for by NSF DMS-0706979.

\section{Proof of Theorem \ref{thm:main}}
\label{sec:proof}
In this section we prove Theorem \ref{thm:main}.  This is accomplished in two steps.   The first is to determine the effect of cabling on the Hopf invariant.   The next is to determine  whether a cable knot induces the tight contact structure, $\std$.  Since contact structures on $S^3$ are determined by their Hopf invariant and whether they are tight, this will complete the proof.   Both tasks can be accomplished with an understanding of the knot Floer homology invariants of cable knots, which we established in \cite{Cable,CableII}.   The knot Floer homology invariants of a fibered knot $K\subset S^3$, in turn, determine the Hopf invariant of $\xi_K$ and whether $\xi_K$ is tight.

\subsection{Contact structures and knot Floer homology of fibered knots}
We begin by briefly describing the algebraic structure of the knot Floer homology invariants and how these invariants determine the contact structure associated to a fibered knot $K\subset S^3$.


To a closed oriented three-manifold, $Y$, and Spin$^c$ structure, $\spinc$,  \ons \  introduced a chain complex $\CFa(Y,\spinc)$ \cite{HolDisk}.  A null-homologous knot $(Y,K)$ induces a filtration $\Filt(Y,K,i)$ of this chain complex, i.e. there is an increasing sequence of subcomplexes:
$$ 0=\Filt(Y,K,-i) \subseteq \Filt(Y,K,-i+1)\subseteq \ldots \subseteq
\Filt(Y,K,n)=\CFa(Y,\spinc).$$
	
\noindent For the definition of this filtration see \cite{Knots,Ras1}. The homology of the successive quotients in this filtration are the {\em knot Floer homology groups of $(Y,K)$}.  In the case of the three-sphere, we denote these groups by
$$\HFKa_*(K,i)\cong H_*\left(\frac{\Filt(S^3,K,i)}{\Filt(S^3,K,i-1)}\right)$$
 
The knot Floer homology groups have the following symmetries (Propositions $3.7$ and $3.10$ of \cite{Knots}, respectively):
\begin{equation}
\label{eq:mirror}
\HFKa_{*}(K,i)\cong \HFKa_{-*}(\overline{K},-i).
\end{equation}
 \begin{equation}
\label{eq:jsym}
\HFKa_{*}(K,i)\cong \HFKa_{*-2i}(K,-i).
\end{equation}
 
 In the first equation, $\overline{K}$ denotes the reflection of $K$ (i.e. in a projection, change all crossings simultaneously or, equivalently, consider the image of $K$ under an orientation-reversing involution of $S^3$).  In \cite{Contact}, \ons \ proved the following:
\begin{prop}\label{prop:hopf}
Let $K\subset S^3$ be a fibered knot, and let $g(K)=g$ denote its Seifert genus.  Then
$$\HFKa_*(K,g)\cong 
\left\{\begin{array}{ll}

 $\Z$ & {*=-h(\xi_K)} \\
 
0 & {\text{otherwise.}}\\
\end{array}
\right. $$
\noindent where $h(\xi_K)$ is the Hopf invariant of the contact structure induced by $K$.
\end{prop}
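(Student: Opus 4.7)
The plan is to identify the generator of $\HFKa_*(K,g)$ with a chain representing the Ozsváth-Szabó contact invariant $c(\xi_K)\in \HFa(-S^3)$, and then read off its Maslov grading from the homotopy class of $\xi_K$.

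First, I would show that $\HFKa_*(K,g)\cong \Z$ is cyclic. Starting from the open book decomposition $(F,K)$ of $S^3$, one constructs a distinguished Heegaard diagram in which a single intersection point in $\Ta\cap\Tb$ realizes the top filtration level $\Filt(S^3,K,g)/\Filt(S^3,K,g-1)$, so this level is free of rank one as a chain complex. Its homology is therefore $\Z$, concentrated in a single Maslov degree $m$.

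Second, I would identify this generator with the contact invariant. For fibered knots, Ozsváth and Szabó define $c(\xi_K)\in \HFa(-S^3)$ to be the class (up to sign) of the generator of the top filtration level, viewed via the natural Maslov-grading-preserving map
$$\frac{\Filt(S^3,K,g)}{\Filt(S^3,K,g-1)} \longrightarrow \HFa(-S^3).$$
Thus $m$ equals the Maslov grading at which the cycle representing $c(\xi_K)$ sits in $\CFa(-S^3)$, whether or not $c(\xi_K)$ is itself nonzero.

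Finally, the absolute Maslov grading of this representative depends only on the homotopy class of the two-plane field underlying $\xi$: for any contact structure $\xi$ on a rational homology sphere it equals $-d_3(\xi)-\OneHalf$, and for $S^3$ one has $d_3(\xi)=h(\xi)-\OneHalf$ (normalized so that $d_3(\std)=-\OneHalf$ and $h(\std)=0$). Combining these gives $m=-h(\xi_K)$, which is the conclusion. The main obstacle is the bookkeeping of orientation, mirror, and grading conventions -- in particular, the contact invariant is naturally defined in $\HFa(-S^3)$ while $\HFKa$ is computed for $K\subset S^3$, so one must carefully transport gradings across the symmetry \eqref{eq:mirror}, and keep track of the fact that for overtwisted $\xi_K$ the class $c(\xi_K)$ vanishes even though its representing cycle still sits in a definite Maslov degree.
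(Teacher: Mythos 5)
Your overall route is the one the paper itself takes: the paper's proof is essentially a citation of Theorem $1.1$ and Proposition $4.6$ of \cite{Contact} (for a fibered knot the extremal knot Floer group is $\Z$, generated by the cycle representing the contact class, in a grading determined by the homotopy class of the underlying plane field), followed by one application of the symmetry \eqref{eq:mirror} to pass from $(-S^3,K,-g)$ to $(S^3,K,g)$; your first two steps are sketches of exactly those cited facts, so there is no difference in strategy.

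The gap is in the grading bookkeeping, which is the entire content of the proposition. First, the ``natural Maslov-grading-preserving map'' $\Filt(S^3,K,g)/\Filt(S^3,K,g-1)\to\HFa(-S^3)$ does not exist as stated: the top Alexander level of $\CFa(S^3)$ is a quotient complex, so it receives a map from $\CFa(S^3)$ rather than mapping anywhere; the contact class of \cite{Contact} is defined via the inclusion of the bottom-level subcomplex $\Filt(-S^3,K,-g)\hookrightarrow \CFa(-S^3)$ (i.e., for the mirror knot), and identifying that subcomplex with your quotient complex goes through the duality underlying \eqref{eq:mirror}, which negates the Maslov grading. Second, the identity $d_3(\xi)=h(\xi)-\OneHalf$ is not forced by the normalization $d_3(\std)=-\OneHalf$, $h(\std)=0$: that normalization is equally consistent with $d_3(\xi)=-h(\xi)-\OneHalf$, and which relation holds depends on the sign convention for the Hopf invariant fixed in \cite{Contact} and used here. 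Since the statement you are proving is precisely a sign ($*=-h(\xi_K)$ rather than $*=+h(\xi_K)$), these two sign-sensitive steps cannot be deferred as ``bookkeeping'': an equally plausible resolution of either one outputs the wrong sign, and the two errors you would need to avoid are exactly of the kind your hedging acknowledges but does not settle. The paper sidesteps this by quoting Proposition $4.6$ of \cite{Contact} verbatim (the generator of $\HFKa_*(-S^3,K,-g)$ lies in grading $h(\xi_K)$) and then applying \eqref{eq:mirror} once, which carries it to grading $-h(\xi_K)$ for $(S^3,K,g)$; to complete your version you would need to make the duality step explicit and verify that your $d_3$--$h$ convention agrees with the one in \cite{Contact}.
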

\begin{remark} This proposition is a combination of Theorem $1.1$ and Proposition $4.6$ of \cite{Contact}.   More precisely, Theorem $1.1$ shows that $$\HFKa_*(-S^3,K,-g)\cong H_*(\Filt(-S^3,K,-g))\cong \Z,$$ \noindent and Proposition $4.6$ shows that the grading of this group is $h(\xi_K)$.  Equation \eqref{eq:mirror} then determines the effect of reflection on knot Floer homology (i.e. $(-S^3,K)=(S^3,\overline{K})$).  Note that each of the aforementioned results generalize to knots in manifolds other than $S^3$.
\end{remark}

This yields the following  property of $h(\xi_K)$, which can also be found in \cite{Rudolph3}.
\begin{cor}\label{cor:hopfmirror}
The Hopf invariant satisfies $h(\xi_K)=-h(\xi_{\overline{K}})-2g$, where $g$ is the genus of $K$.   
\end{cor}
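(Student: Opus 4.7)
The plan is to apply Proposition \ref{prop:hopf} to both $K$ and its mirror $\overline{K}$, and then reconcile the two supports of the top filtration level of $\HFKa$ using the two symmetries \eqref{eq:mirror} and \eqref{eq:jsym}. First I would observe that reflection preserves both fiberedness and the Seifert genus, so $\overline{K}$ is a fibered knot of genus $g$ and Proposition \ref{prop:hopf} applies to it as well. This gives two parallel pieces of data: $\HFKa_*(K,g)$ is a copy of $\Z$ concentrated in grading $-h(\xi_K)$, and $\HFKa_*(\overline{K},g)$ is a copy of $\Z$ concentrated in grading $-h(\xi_{\overline{K}})$.

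Next I would use the symmetries to translate one of these statements into a statement about the other knot at the same filtration level. Starting from $\HFKa_*(K,g)$, equation \eqref{eq:mirror} gives an isomorphism $\HFKa_*(K,g)\cong \HFKa_{-*}(\overline{K},-g)$, so $\HFKa_*(\overline{K},-g)$ is supported in grading $h(\xi_K)$. Then equation \eqref{eq:jsym}, applied to $\overline{K}$, gives $\HFKa_*(\overline{K},g)\cong \HFKa_{*-2g}(\overline{K},-g)$, so $\HFKa_*(\overline{K},g)$ is supported in grading $h(\xi_K)+2g$.

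Finally I would compare this with the direct application of Proposition \ref{prop:hopf} to $\overline{K}$, which placed $\HFKa_*(\overline{K},g)$ in grading $-h(\xi_{\overline{K}})$. Equating the two grading values yields
\begin{equation*}
h(\xi_K)+2g = -h(\xi_{\overline{K}}),
\end{equation*}
which rearranges to the claimed identity $h(\xi_K)=-h(\xi_{\overline{K}})-2g$.

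There is no serious obstacle here: the entire content is bookkeeping with the two symmetries, and the only subtlety is making sure that the gradings on each side of each isomorphism are tracked correctly (in particular that the shift in \eqref{eq:jsym} is by $-2i = -2g$ and not by $+2g$). The proof uses Proposition \ref{prop:hopf} as a black box and does not require any further input from contact topology.
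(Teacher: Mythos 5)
Your proposal is correct and is exactly the paper's argument: the paper's proof simply says to combine Proposition \ref{prop:hopf} with Equations \eqref{eq:mirror} and \eqref{eq:jsym}, and your write-up carries out that combination with the gradings tracked correctly (in particular the shift $*-2g$ in \eqref{eq:jsym}).
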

\begin{proof} Combine Proposition \ref{prop:hopf} with Equations \eqref{eq:mirror} and \eqref{eq:jsym}.\end{proof}

The above proposition indicates that the grading on knot Floer homology captures the homotopy class of the contact structure.  In the case of the three-sphere, it remains to understand whether a contact structure is tight or overtwisted.  To this end, recall that the Floer homology of the three-sphere satisfies
$$ \HFa_*(S^3)\cong 
\left\{\begin{array}{ll}

 $\Z$ & {*=0} \\
 
0 & {\text{otherwise.}}\\
\end{array}
\right. $$
\noindent Thus, we can define the following numerical invariant of $K$,\cite{FourBall,Ras1}:
$$\tau(K)=\mathrm{min}\{j\in\Z|i_* : H_*(\Filt(S^3,K,j))\longrightarrow \HFa(S^3)\cong \Z \  \mathrm{is \ non\-trivial}\}.$$

For fibered knots, $\tau(K)$ determines whether $\xi_K$ is tight or overtwisted.  Moreover, it determines whether $K$ arises as the boundary of a properly embedded complex curve $V_f\subset B^4$ satisfying $g(V_f)=g(K)$. 

\begin{prop} \lbra Proposition $2.1$ of \cite{SQPfiber}.\rbra \label{prop:SQPfiber}
	Let $K\subset S^3$ be a fibered knot and $F$ its fiber surface. Then the following are equivalent:
	\begin{enumerate}
		\item $K$ satisfies $g(K)=\tau(K).$
		\item  $\xi_K\simeq \std$.
		\item $K$ is strongly quasipositive \lbra with $F$ a quasipositive Seifert surface \rbra.
	
		\item  $F$ is isotopic to a properly embedded complex curve $(V_f,K) \subset (B^4,S^3)$.
	\end{enumerate}
\end{prop}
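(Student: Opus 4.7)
The plan is to close a cycle of implications, namely (3) $\Rightarrow$ (4) $\Rightarrow$ (1) $\Rightarrow$ (2) $\Rightarrow$ (3), mixing Rudolph's theory of quasipositive surfaces, the Heegaard Floer input of Proposition \ref{prop:hopf}, the \os\ contact invariant of \cite{Contact}, and Giroux's correspondence.

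For (3) $\Rightarrow$ (4), I would invoke Rudolph's theorem that a quasipositive surface \lbra parallel disks joined by positive embedded bands \rbra\ is, up to isotopy rel boundary, a transverse slice of a complex affine plane curve.  Applied to the fiber surface $F$, this directly produces the properly embedded complex curve $V_f \subset B^4$ required in (4).

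For (4) $\Rightarrow$ (1), the existence of a complex curve $V_f \subset B^4$ of genus $g(V_f) = g(K)$ with boundary $K$ forces the slice-Bennequin-type inequality for $\tau$ to be sharp, yielding $\tau(K) \ge g(K)$; combined with the universal bound $\tau(K) \le g(K)$, this gives $\tau(K) = g(K)$.

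For (1) $\Rightarrow$ (2), Proposition \ref{prop:hopf} identifies $\HFKa_*(K, g) \cong \Z$ in the single grading $-h(\xi_K)$.  By the very definition of $\tau$, the hypothesis $\tau(K) = g(K)$ is the statement that the generator of this $\Z$-summand survives to the generator of $\HFa(S^3) \cong \Z$.  By the \os\ characterization of the contact invariant of a fibered knot \cite{Contact}, this same generator, viewed in $\HFa(-S^3)$ via the mirror identification of Equation \ref{eq:mirror}, is precisely the contact class $c(\xi_K)$.  Its non-vanishing forces $\xi_K$ to be tight, hence \lbra since $S^3$ admits a unique tight contact structure \rbra\ $\xi_K \simeq \std$.

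For (2) $\Rightarrow$ (3), Giroux's correspondence asserts that the open book $(F, \phi)$ of $K$ supports $\std$ if and only if the monodromy $\phi$ admits a factorization into right-handed Dehn twists.  Translating such a factorization back to the band description of $F$ identifies $F$ with parallel disks joined by one positive embedded band per Dehn twist factor, exhibiting $F$ as a quasipositive Seifert surface and $K$ as strongly quasipositive.  The main obstacle is precisely this last implication: converting the abstract isotopy class of the contact structure into an explicit quasipositive description of the \emph{given} fiber surface $F$ \lbra rather than of some unrelated Seifert surface for $K$ \rbra\ demands a careful Giroux-correspondence argument, and is where the specific contact geometry of $(S^3, \std)$ enters most essentially.
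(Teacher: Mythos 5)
A point of comparison first: the paper does not reprove the equivalence of (1)--(3) at all; it quotes Proposition 2.1 of \cite{SQPfiber} for those, and only adds item (4), via exactly the two references you use --- (3) $\Rightarrow$ (4) is Rudolph's theorem that quasipositive surfaces are pieces of complex curves \cite{Rudolph}, and (4) $\Rightarrow$ (1) is the sharpness of the $\tau$ genus bound for knots bounding complex curves (Theorem 1.5 of \cite{SQPfiber}). So your (3) $\Rightarrow$ (4) $\Rightarrow$ (1) matches the paper's treatment; your attempt to also reprove (1) $\Rightarrow$ (2) $\Rightarrow$ (3) goes beyond what this paper does, and that is where the trouble lies.

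The genuine gap is (2) $\Rightarrow$ (3). Giroux's correspondence does not assert that an open book supports $\std$ if and only if its monodromy factors into right-handed Dehn twists. What is true is that Stein fillability is equivalent to the existence of \emph{some} supporting open book with positively factored monodromy; it gives no control over the particular open book $(F,\phi)$ determined by the fibered knot $K$, and whether \emph{every} open book supporting $\std$ admits a positive factorization is exactly the delicate question you would need --- it is not a consequence of Giroux's theorem. Moreover, even granting a positive factorization, the passage ``one positive band per Dehn twist factor'' is not a routine translation: positive Dehn twists along curves in the page are not the same operation as attaching embedded positive bands to parallel disks. The argument of \cite{SQPfiber} (the proof the paper is citing) sidesteps both issues: since $\xi_K\simeq\std$ is also supported by the disk open book of the unknot, Giroux's stable-equivalence theorem gives a common positive stabilization, so after plumbing positive Hopf bands onto $F$ one obtains an iterated plumbing of positive Hopf bands on a disk, which is quasipositive; Rudolph's theorem that a Murasugi summand of a quasipositive surface is again quasipositive then removes the added Hopf bands and shows $F$ itself is quasipositive. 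A smaller soft spot: in (1) $\Rightarrow$ (2), the claim that $\tau(K)=g(K)$ is ``by the very definition of $\tau$'' the nonvanishing of the contact class elides a real step, since $\tau(K)=g(K)$ concerns the top of the filtration for $(S^3,K)$ while $c(\xi_K)$ is the image of the bottom filtration level for $(-S^3,K)$ in $\HFa(-S^3)$; identifying the two conditions requires the duality between these complexes, which is carried out in \cite{SQPfiber}, so that step is right in substance but not by definition.
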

\begin{remark} A strongly quasipositive knot is a knot which bounds a so-called quasipositive Seifert surface.  Quasipositive Seifert surfaces, in turn, are those surfaces-with-boundary which are obtained from parallel disks by attaching positive bands.  See Figure \ref{fig:SQPcable} for examples, and \cite{SQPfiber} for further details.  We have added $(4)$ above, to the equivalences of \cite{SQPfiber}.  However, $(3)$ implies $(4)$ by work of Rudolph \cite{Rudolph}, and $(4)$ implies $(1)$ by Theorem $1.5$ of \cite{SQPfiber}.
\end{remark}

Thus, to determine the effect of cabling on contact structures, it will suffice to understand the behavior of the grading of the ``top group" of knot Floer homology under cabling, together with the behavior of $\tau(K)$.  We begin with the former.

\subsection{Behavior of the Hopf invariant under cabling}

In this section, we determine the effect of cabling on the Hopf invariant of a fibered knot.  We prove the following:

\begin{thm}\label{thm:cablehopf}

Let $p>0$, and let $K$ be a fibered knot of genus $g$.  Then we have
$$h(\xi_{K_{p,q}})=
\left\{\begin{array}{ll}
h(\xi_K) & {\text{for\ }  q>0} \\
h(\xi_K)+(1-p)(2g-q-1) & {\text{for\ }  q<0}.
\end{array}
\right.$$
\end{thm}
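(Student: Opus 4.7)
The plan is to extract the Hopf invariant of $\xi_{K_{p,q}}$ from the knot Floer homology of $K_{p,q}$ via Proposition \ref{prop:hopf}. Since $K_{p,q}$ is fibered of genus $g(K_{p,q})$, that proposition tells us $\HFKa_*(K_{p,q}, g(K_{p,q}))$ is concentrated in a single Maslov grading equal to $-h(\xi_{K_{p,q}})$. So it suffices to compute the Maslov grading of the top Alexander-graded generator of $\HFKa(K_{p,q})$ and express it in terms of the corresponding grading for $K$, bearing in mind the standard cable genus formula $g(K_{p,q}) = p \cdot g + \frac{(p-1)(|q|-1)}{2}$.

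For the case $q > 0$, I would invoke the cabling formulas from \cite{Cable, CableII}. The specific input needed is that, for positive cables of a fibered knot, the unique generator of $\HFKa_*(K_{p,q}, g(K_{p,q}))$ sits in the same absolute Maslov grading as the generator of $\HFKa_*(K, g(K))$. Applying Proposition \ref{prop:hopf} to both $K$ and $K_{p,q}$ then immediately yields $h(\xi_{K_{p,q}}) = h(\xi_K)$.

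For the case $q < 0$, my plan is to reduce to the previous case via mirror symmetry rather than invoking the cabling machinery a second time. Note that $\overline{K_{p,q}} = (\overline{K})_{p,-q}$, where now $-q > 0$ is a positive cabling parameter. So the positive-case result gives $h(\xi_{\overline{K_{p,q}}}) = h(\xi_{\overline{K}})$. Applying Corollary \ref{cor:hopfmirror} to both $K$ and $K_{p,q}$ we obtain
\begin{align*}
h(\xi_{K_{p,q}}) &= -h(\xi_{\overline{K_{p,q}}}) - 2 g(K_{p,q}) \\
&= -h(\xi_{\overline{K}}) - 2 g(K_{p,q}) \\
&= h(\xi_K) + 2g - 2 g(K_{p,q}).
\end{align*}
Substituting $g(K_{p,q}) = pg + \frac{(p-1)(-q-1)}{2}$ and simplifying gives $h(\xi_K) + (1-p)(2g - q - 1)$, as claimed.

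The main obstacle is the $q > 0$ step, where one needs to know the absolute Maslov grading of the top generator in $\HFKa(K_{p,q})$. The Alexander grading of this generator is forced by fiberedness together with the cable genus formula, but the Maslov grading is a more delicate absolute invariant that must be tracked through the explicit Heegaard diagrams and filtered chain complexes developed in \cite{Cable, CableII}. Once that model computation is recorded, the remainder of the argument is a clean manipulation via mirror symmetry and the standard genus formula.
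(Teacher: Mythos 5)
Your reduction of the $q<0$ case to the $q>0$ case via $\overline{K_{p,q}}=(\overline{K})_{p,-q}$ and Corollary \ref{cor:hopfmirror} is correct, and it is in fact shorter than the paper's route (the paper re-derives the negative case by combining the mirror identity for $q=-pn-1$, $n\gg 0$, with the Murasugi sum decomposition to first pin down $h(\xi_{K_{p,-1}})$ and then general $q<0$). Given the positive case, your negative case is fine.

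The problem is the $q>0$ case, which is where your proposal has a genuine gap. The "specific input" you invoke --- that for every positive $q$ the generator of $\HFKa_*(K_{p,q},g(K_{p,q}))$ lies in the same absolute Maslov grading as the generator of $\HFKa_*(K,g)$ --- is, via Proposition \ref{prop:hopf}, literally equivalent to the statement $h(\xi_{K_{p,q}})=h(\xi_K)$ you are trying to prove, and it is not available in \cite{Cable,CableII} in that generality. What those papers provide (and what this paper cites as Proposition \ref{prop:largen}) is the identification of the top group only for cabling parameters of the form $q=pn+1$ with $n$ sufficiently large; the filtered complexes there are understood in a stable range of large twisting, so "tracking the Maslov grading through the explicit Heegaard diagrams" for, say, $K_{p,1}$ is exactly the missing computation, not a recorded one. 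The paper bridges from the large-$n$ result to all $q>0$ with an extra idea you never supply: the Neumann--Rudolph observation (Proposition \ref{prop:msum}) that $K_{p,q}$ is a Murasugi sum of $K_{p,1}$ and $T_{p,q}$, combined with additivity of the Hopf invariant under Murasugi sum and the vanishing $h(\xi_{T_{p,q'}})=0$ for positive torus knots; comparing $K_{p,pn+1}=K_{p,1}*T_{p,pn+1}$ with $K_{p,q}=K_{p,1}*T_{p,q}$ then transfers the large-$n$ information to every positive $q$. Without this (or some substitute, e.g. a direct convex-surface or plumbing argument), your $q>0$ step, and hence also your $q<0$ step which depends on it, is not established.
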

\begin{remark}  This result can be derived from a theorem of Neumann and Rudolph \cite{IndexDiff}, proved in a different context.  They proved a similar formula for the {\em enhanced Milnor number} of a fibered link, which they denote $\lambda(K)$. Our formula follows from theirs since, after tracing through the definitions of the Hopf invariant and enhanced Milnor number, one finds that $\lambda(K)=-h(\xi_K)$.  Our proof, using Floer homology, is quite different in spirit and follows easily from our previous understanding of the Floer homology of cable knots. 
\end{remark}

The main tool will be the following result from \cite{Cable}
\begin{prop} \lbra Corollary $1.3$ of \cite{Cable}.\rbra 
\label{prop:largen}
Let $p>0$ and
let $K\subset S^3$ be a knot of Seifert genus $g$. Then there exists an $N>0$ such that for all  $n>N$, the following holds:  

$$ \HFKa_*\left(K_{p,pn+1},\ pg+\frac{(p-1)(pn)}{2}\right) \cong 
\HFKa_*(K,g)$$

\end{prop}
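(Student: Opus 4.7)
The plan is to compute $\HFKa$ of the cable from a carefully chosen Heegaard diagram and identify the result with $\HFKa(K, g)$ for $n$ sufficiently large. I begin with an arithmetic sanity check: the Seifert genus formula for cables, $g(K_{p,q}) = pg + (p-1)(q-1)/2$, specializes at $q = pn+1$ to $pg + (p-1)(pn)/2$. Hence the stated Alexander grading is precisely the top grading of $\CFKa(K_{p,pn+1})$, and one expects the top group to encode deep invariants of the underlying knot $K$.

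Next I would construct a doubly-pointed Heegaard diagram for $(S^3, K_{p,pn+1})$ by adapting one for $(S^3, K)$: inside a small neighborhood of $K$ on the Heegaard surface, insert a standard cabling template consisting of $p-1$ new pairs of $\alpha$- and $\beta$-curves together with $p$ parallel strands that wind $pn+1$ times around the meridian. A combinatorial accounting of Alexander gradings then shows that any generator realizing the top grading $pg + (p-1)(pn)/2$ is forced to place its cabling components at a unique local position in the template; the remaining data is exactly a generator of $\CFKa(K, g)$. This identifies top-grading generators of the cable diagram with those of the base diagram for $K$.

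The heart of the proof is matching differentials under this identification. For $n$ large, any Whitney disk in the symmetric product of the cable diagram connecting two top-grading generators is area-constrained: a disk whose domain wanders into the winding region of the cabling template accrues area growing linearly with $n$, so such disks cannot contribute to the differential once $n$ is sufficiently large. The disks that do survive are in natural bijection with those computing the differential on $\CFKa(K, g)$. The main obstacle is this analytic step, which I would handle by a neck-stretching argument separating the cabling template from the remainder of the diagram, or alternatively by a direct domain-area bound on the relevant Whitney disks. Combining the generator identification with the differential identification yields the claimed isomorphism.
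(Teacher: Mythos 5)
The paper itself offers no proof of this proposition: it is imported verbatim as Corollary~$1.3$ of \cite{Cable}, whose proof runs along lines broadly similar to your sketch (a doubly-pointed Heegaard diagram adapted to the cable containing a ``winding region,'' identification of the generators in the top Alexander grading with generators for $K$ once $n$ is large, and a matching of the induced differentials). So your overall strategy is reasonable, but the step you call the heart of the proof does not work as stated. The differential on the top associated-graded piece counts Maslov index one holomorphic disks with zero multiplicity at both basepoints, with $\Z$ (or $\F_2$) coefficients and no Novikov-type weighting; consequently a disk cannot be discarded merely because its domain has large area, no matter how large $n$ is. The correct mechanism is combinatorial, not metric: the role of $n\gg 0$ is to force, via the Alexander grading computation, the winding-region coordinate of every top-grading generator to be the distinguished outermost intersection point; once that is known, positivity of domains together with the constraint $n_w=n_z=0$ shows that any index-one domain connecting two such generators has zero multiplicity throughout the winding region, hence coincides with a domain of the original diagram for $K$, giving an identification of the counts on the nose. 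A neck-stretching argument could be substituted, but then you must actually prove that holomorphic curves localize in the stretched limit and that the limiting counts compute the differential; an area bound alone proves nothing.

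Two further points. First, the statement (and its use in this paper) is as an isomorphism of \emph{absolutely} Maslov-graded groups --- this is exactly what allows the deduction $h(\xi_{K_{p,pn+1}})=h(\xi_K)$ via Proposition~\ref{prop:hopf}. Matching generators and differentials only yields a relatively graded isomorphism; you still need to pin down the absolute grading of the distinguished top-grading generators (in \cite{Cable} this is done by explicit grading computations in the winding region, building on the large-$n$ surgery formula of \os). Second, your cabling template with $p-1$ new pairs of attaching curves and $p$ parallel strands each winding $pn+1$ times is not the construction of \cite{Cable}, which keeps a single winding curve and encodes the $p$-strandedness by where the basepoints sit in the winding region; your version is not necessarily wrong, but the claimed ``unique local position'' of the cabling components in the top Alexander grading would then require a genuine combinatorial argument that the sketch does not supply.
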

\begin{remark}
Note that $pg+\frac{(p-1)(pn)}{2}$ is the genus of $K_{p,pn+1}$.  
\end{remark}
Propositions \ref{prop:hopf} and \ref{prop:largen}  determine the effect of cabling on the Hopf invariant when $q=pn+1\gg0$.  An observation of Neumann and Rudolph \cite{Unfoldings} which interprets an arbitrary cable as a Murasugi sum of the $(p,\pm 1)$ cable and an appropriate torus knot will allow us to take care of the case when $q>0$.

\begin{prop}\label{prop:msum} \lbra Figure $4.2$ of \cite{Unfoldings} or Figure $1$ of \cite{Rudolph2}.\rbra  
\ Assume $p>0$.  Then $K_{p,q}$ is a Murasugi sum of $K_{p,\mathrm{sgn}(q)}$ and $T_{p,q}$.  Here, $\mathrm{sgn}(q)$ is the function which is $1$ if $q>0$ and $-1$ otherwise.
\end{prop}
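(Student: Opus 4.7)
The plan is to prove the decomposition by an explicit geometric construction of the fiber surface of $K_{p,q}$, following the pictures in the cited references. First, I would construct the fiber surface $F_{p,q}$ of $K_{p,q}$ in the standard way: let $F$ be the fiber surface of $K$ and $N(K)$ a tubular neighborhood; outside $N(K)$, take $p$ parallel pushoffs of $F$ using the Seifert framing, which meet $\partial N(K)$ in $p$ parallel longitudes; inside $N(K)\cong D^2\times S^1$, attach a braided surface $\Sigma$ realizing the $(p,q)$-torus pattern, whose boundary consists of these $p$ longitudes together with $K_{p,q}$ on the boundary torus.

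Next, I would exhibit a Murasugi-sum decomposition of $\Sigma$ itself inside the solid torus. Visually, $\Sigma$ isotopes so that it splits into two pieces joined along a rectangular disk: a ``minimal'' piece $\Sigma_0$ realizing only the $(p,\mathrm{sgn}(q))$ twisting, and a ``twisting'' piece $\Sigma_1$ containing the remaining $(q-\mathrm{sgn}(q))/p$ full twists. Gluing $\Sigma_0$ back onto the exterior parallel copies of $F$ reconstructs the fiber surface of $K_{p,\mathrm{sgn}(q)}$, while $\Sigma_1$ can be isotoped into a small ball in $S^3$ where, after capping off along the gluing disk, it becomes the standard fiber surface of $T_{p,q}$.

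Finally, I would verify that the gluing rectangle lies on a sphere in $S^3$ that separates the two summand surfaces, confirming that this is a Murasugi sum in the sense of \cite{Gabai}. The main obstacle is pictorial: arranging the geometric setup so that the decomposing disk and separating sphere are simultaneously visible and correctly identify the summands as $K_{p,\mathrm{sgn}(q)}$ and $T_{p,q}$. As a consistency check, one verifies the Euler characteristic identity $\chi(F_{K_{p,q}}) = \chi(F_{K_{p,\mathrm{sgn}(q)}}) + \chi(F_{T_{p,q}}) - 1$, which follows from the standard genus formulas for cable and torus knots and confirms the $q=\mathrm{sgn}(q)$ base case (where $T_{p,\pm 1}$ is an unknot bounding a disk and the Murasugi sum is trivial).
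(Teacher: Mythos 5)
Your outline is essentially a verbal rendering of the construction the paper itself relies on: the paper offers no written argument for this proposition, only a pointer to Figure $4.2$ of \cite{Unfoldings} and Figure $1$ of \cite{Rudolph2} (together with the remark that the sum is taken along fiber surfaces), and those figures depict exactly your setup --- $p$ Seifert-framed parallel copies of $F$ outside $N(K)$, a braided piece inside, and a Murasugi decomposition of the latter. So the approach is the right one; what needs flagging is that the two points you defer to the picture are precisely where the content lies. First, for general $p$ the summing region is a $2p$-gon whose $2p$ boundary arcs alternate between the two summands; a ``rectangular disk'' (a $4$-gon, i.e.\ an honest plumbing) is only correct for $p=2$. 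Second, and more substantively, in a Murasugi sum both summands contain the polygon, and here the polygon itself carries one ``round'' of the cabling twist: it is the cyclic arrangement of its boundary arcs on the separating sphere that feeds the $(p,\mathrm{sgn}(q))$ round into the torus-knot side. Your phrasing --- $\Sigma_0$ carries the $(p,\mathrm{sgn}(q))$ twisting, $\Sigma_1$ carries only the remaining $q-\mathrm{sgn}(q)$ elementary twists, and the two meet along a neutral disk --- would, read literally, lead one to identify the capped-off piece as the fiber surface of $T_{p,q-\mathrm{sgn}(q)}$ rather than of the full $T_{p,q}$. Your Euler characteristic check is consistent with $T_{p,q}$ (and would in fact rule out $T_{p,q-\mathrm{sgn}(q)}$, since the two answers differ by $p-1$), but it neither identifies the knot types of the summands nor verifies that the reassembled boundary is the cable with the correct framing; that identification is exactly what the cited figures supply, so in a complete write-up it must either be drawn out carefully or cited, not asserted.
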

\noindent In the above, the Murasugi sum can be taken to be along minimal genus Seifert surfaces which, for our purposes, will be fiber surfaces.


\bigskip

\noindent {\bf Proof of Theorem \ref{thm:cablehopf} when $q>0$:}

For all sufficiently large $n>0$, we have
$$h(\xi_K)=h(\xi_{K_{p,pn+1}})=h(\xi_{K_{p,1}*T_{p,pn+1}})=h(\xi_{K_{p,1}})+h(\xi_{T_{p,pn+1}})=$$
$$=h(\xi_{K_{p,1}})+h(\xi_{T_{p,q}})=h(\xi_{K_{p,1}*T_{p,q}})=h(\xi_{K_{p,q}}).$$
\noindent Here, the first equality follows from Proposition \ref{prop:largen}, the second from Proposition \ref{prop:msum}, and the third from the well-known additivity of the Hopf invariant under Murasugi sum (see, for instance \cite{Unfoldings}).  The fourth equality follows from the fact that any positive torus knot has Hopf invariant $0$ (which can be seen, for instance by realizing them all as plumbings of positive Hopf bands).  The fifth and six equalities are an application of additivity of Hopf invariant and Proposition \ref{prop:msum}, respectively.   
$\Box$

\bigskip

\noindent {\bf Proof of Theorem \ref{thm:cablehopf} when $q<0$:}

For all sufficiently large $n>0$, we have
 \begin{equation} \label{eq:string1} \begin{array}{c} h(\xi_{{K_{p,-pn-1}}})=-h(\xi_{\overline{K}_{p,pn+1}})-2\left(pg+\frac{(pn)(p-1)}{2}\right)=\\  \\ = -h(\xi_{\overline{K}})-2pg+(1-p)pn= h(\xi_K)+(1-p)(2g+pn). \end{array} \end{equation}
\noindent The first equality follows from Corollary \ref{cor:hopfmirror}, bearing in mind the genus of $K_{p,-pn-1}$ and the fact that $\overline{K_{p,-pn-1}}=\overline{K}_{p,pn+1}$. The second equality follows from the previous case, since  $pn+1>0$. The third equality is an application of Corollary \ref{cor:hopfmirror}.  

Next we have
\begin{equation} 
\label{eq:string2} h(\xi_{K_{p,-pn-1}})=h(\xi_{K_{p,-1}})+h(\xi_{T_{p,-pn-1}})=h(\xi_{K_{p,-1}})+(1-p)(pn). \end{equation}

\noindent Here, the first equality comes from Proposition \ref{prop:msum} and additivity of the Hopf invariant under Murasugi sums.  The second equality comes from Corollary \ref{cor:hopfmirror}, noting that  $\overline{T_{p,-pn-1}}=T_{p,pn+1}$ and $2g(T_{p,-pn-1})=(p-1)pn$.

By the same considerations, we have 
\begin{equation} \label{eq:string3} h(\xi_{K_{p,q}})=h(\xi_{K_{p,-1}})+h(\xi_{T_{p,q}})=h(\xi_{K_{p,-1}})+(1-p)(-q-1) \end{equation}

Solving Equations \eqref{eq:string1}, \eqref{eq:string2}, and \eqref{eq:string3} for $h(\xi_{K_{p,q}})$ yields
$$h(\xi_{K_{p,q}})= h(\xi_K)+ (1-p)(2g-q-1),$$\noindent as claimed. $\Box$

\subsection{Determination of fibered cable knots which induce $\std$}

In this section, we use Proposition \ref{prop:SQPfiber} to determine which cable knots induce the standard tight contact structure on $S^3$.  We prove the following:
\begin{thm}\label{thm:tightness}
Let $p>0$, and let $K$ be a non-trivial fibered knot.  Then $\xi_{K_{p,q}}\simeq \std$ if and only if  $\xi_K\simeq \std$ and $q>0$.
\end{thm}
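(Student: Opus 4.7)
The plan is to translate the theorem into an arithmetic statement about the Heegaard Floer concordance invariant $\tau$ using Proposition \ref{prop:SQPfiber}, and then apply the cabling formulas for $\tau$ established in \cite{Cable,CableII}. Since Proposition \ref{prop:SQPfiber} identifies $\xi_L\simeq\std$ with the equality $\tau(L)=g(L)$ for a fibered knot $L$, the task reduces to showing
$$\tau(K_{p,q})=g(K_{p,q}) \iff \tau(K)=g(K) \text{ and } q>0.$$
The case $p=1$ is trivial since $K_{1,q}=K$, so I will assume $p\geq 2$, and will freely use the standard genus formula $g(K_{p,q})=p\,g(K)+\tfrac{(p-1)(|q|-1)}{2}$.

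The sufficient direction reduces to a one-line computation: when $\tau(K)=g(K)$, the cabling formula from \cite{Cable,CableII} reads $\tau(K_{p,q})=p\tau(K)+\tfrac{(p-1)(q-1)}{2}$, which equals $g(K_{p,q})$ precisely when $q>0$. A more geometric argument is also available via Proposition \ref{prop:msum} and a theorem of Rudolph: under the hypothesis, $K$ carries a quasipositive fiber surface, the positive torus knot $T_{p,q}$ does as well, and Murasugi sums of quasipositive surfaces are quasipositive, so the fiber surface of $K_{p,q}=K_{p,1}*T_{p,q}$ is quasipositive and $\xi_{K_{p,q}}\simeq\std$.

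For the necessary direction I would argue by contrapositive in two sub-cases. When $\tau(K)=g(K)$ but $q<0$, the same cabling formula gives $g(K_{p,q})-\tau(K_{p,q})=(p-1)|q|>0$, so $\xi_{K_{p,q}}\not\simeq\std$. When instead $\tau(K)<g(K)$, the cabling bound of \cite{Cable,CableII} yields $\tau(K_{p,q})=p\tau(K)+\tfrac{(p-1)(q\pm 1)}{2}$ depending on the sign regime, and in each sub-case an elementary check shows $g(K_{p,q})-\tau(K_{p,q})\geq p(g(K)-\tau(K))-(p-1)\geq 1$ once $p\geq 2$ and $g(K)>\tau(K)$. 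Applying Proposition \ref{prop:SQPfiber} once more then transfers these inequalities back into statements about contact structures.

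The main obstacle is really only the bookkeeping: the cabling formulas in \cite{Cable,CableII} split into several cases according to the sign of $q$ and the relationship between $\tau(K)$ and $g(K)$, and one must take care to invoke the correct formula in each sub-case. Once the appropriate version is plugged in, the arithmetic comparison with $g(K_{p,q})$ is immediate, and no further geometric input is needed beyond what was used for Theorem \ref{thm:cablehopf}.
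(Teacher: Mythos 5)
Your reduction via Proposition \ref{prop:SQPfiber} to the statement $\tau(K_{p,q})=g(K_{p,q})\iff \tau(K)=g(K),\ q>0$ is exactly the right translation, and your geometric argument for sufficiency (quasipositive fiber surfaces are preserved by Murasugi sum, so $K_{p,1}*T_{p,q}$ bounds one) matches the construction the paper sketches at the end of Section \ref{sec:proof}. The gap is in the ``only if'' direction: you invoke a cabling formula and two-sided bound for $\tau(K_{p,q})$ valid for \emph{all} $q$, attributing it to \cite{Cable,CableII}, but the result actually available there (and quoted in the paper as Theorem \ref{thm:tau}) concerns only cables of the form $K_{p,pn+1}$, i.e.\ $q\equiv 1 \pmod p$. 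Nothing in the cited sources gives an equality, or even an upper bound, for $\tau(K_{p,q})$ with $q<0$; and the exact equality $\tau(K_{p,q})=p\tau(K)+\tfrac{(p-1)(q-1)}{2}$ for negative $q$ that you need in the sub-case $\tau(K)=g(K)$, $q=-1$ is the deepest missing input. Note also that even if one grants the two-sided bound for all $q$, the upper bound only gives $g(K_{p,-1})-\tau(K_{p,-1})\geq 0$ when $\tau(K)=g(K)$, so the critical case $q=-1$ cannot be settled by bookkeeping with the bound alone. A further warning sign: your argument never uses the hypothesis that $K$ is non-trivial, yet the theorem fails for the unknot (its $(p,-1)$ cable is again the unknot, which induces $\std$), and correspondingly the unqualified formula you quote is false for the unknot, since it would give $\tau=-(p-1)\neq 0$ for $U_{p,-1}=U$. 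So the general formula is genuinely delicate and is a later result, not something that can be cited off the shelf here.

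The paper's proof is organized precisely to avoid any general-$q$ statement about $\tau$: Proposition \ref{prop:msum} together with Torisu's theorem reduces everything to $K_{p,\pm1}$ (for $q<-1$ the summand $\xi_{T_{p,q}}$ is already overtwisted, and for $q\geq 1$ one has $\xi_{K_{p,q}}\simeq\xi_{K_{p,1}}$); Theorem \ref{thm:tau} is then applied only at $n=0$ to handle $K_{p,1}$; the case of $K_{p,-1}$ with $\xi_K$ tight is handled by the Hopf-invariant computation of Theorem \ref{thm:cablehopf}, where $h(\xi_{K_{p,-1}})=2g(K)(1-p)\neq 0$ uses $g(K)\geq 1$ --- this is where non-triviality enters; and the case of $K_{p,-1}$ with $\xi_K$ overtwisted is handled by changing $(p-1)$ positive crossings to turn $K_{p,1}$ into $K_{p,-1}$ and using the monotonicity of $\tau$ from Corollary $1.5$ of \cite{FourBall}. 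To repair your proposal you would either need to supply a proof of the general-$q$ cabling formula (well beyond the scope of the cited results), or replace the arithmetic in the $q<0$ cases with arguments of the above kind.
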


Again, we rely on our previous results concerning the Floer homology of cables.  In this case, we call upon Theorem $1.2$ of \cite{CableII}

\begin{thm}
	\label{thm:tau}

Let $K\subset S^3$ be a knot. Then the following inequality holds for all $n$,
$$  p\cm \tau(K)+ \frac{(pn)(p-1)}{2}\ \  \le\tau(K_{p,pn+1})\le \ \ 
        p\cm \tau(K)+\frac{(pn)(p-1)}{2} +p-1. $$

\noindent In the special case when $K$ satisfies $\tau(K)=g(K)$ we have the equality,
$$  \tau(K_{p,pn+1}) =  p\cm \tau(K)+\frac{(pn)(p-1)}{2}.  $$
\end{thm}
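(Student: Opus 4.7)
The plan is to derive the bounds from a careful analysis of the knot Floer complex $\CFKa(K_{p,pn+1})$ built from a Heegaard diagram adapted to the cable. Such a diagram is produced by ``winding'' a doubly-pointed Heegaard diagram for $K$ inside a tubular neighborhood of $K$, and the resulting intersection-point generators decompose as pairs $(\xTuple,I)$ where $\xTuple$ records data essentially equivalent to a generator of $\CFKa(K)$ and $I$ is a combinatorial label tracking the $(p,pn+1)$-torus knot structure on the boundary of the neighborhood. The Alexander grading splits as
$$ A_{K_{p,pn+1}}(\xTuple,I)\;=\;p\, A_K(\xTuple)\,+\, s(I), $$
where $s(I)$ ranges over an explicit set of integers with minimum $\tfrac{pn(p-1)}{2}$ and maximum $\tfrac{pn(p-1)}{2}+(p-1)$. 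Setting up this decomposition, together with its compatibility with the filtration and with the induced map to $\HFa(S^3)$, is the combinatorial content imported from \cite{Cable,CableII}.

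For the upper bound, I would exhibit an explicit cycle in $\CFKa(K_{p,pn+1})$ at Alexander grading $p\tau(K)+\tfrac{pn(p-1)}{2}+(p-1)$ whose homology class generates $\HFa(S^3)$. By definition of $\tau(K)$ there is a cycle $\xTuple_0\in \Filt(S^3,K,\tau(K))$ representing a generator of $\HFa(S^3)$; I would pair $\xTuple_0$ with the label $I_{\max}$ achieving the maximum of $s(I)$ and verify that the differential restricted to this extremal Alexander grading is compatible with the decomposition well enough that $(\xTuple_0,I_{\max})$ is a cycle surviving to a generator of $\HFa(S^3)$.

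For the lower bound I would run the dual argument. If some cycle in the cable complex lived at Alexander grading $j<p\tau(K)+\tfrac{pn(p-1)}{2}$ and surjected onto a generator of $\HFa(S^3)$, then the pair-decomposition would force one of its $(\xTuple,I)$-components, after projecting off the cabling data, to yield a surviving cycle in $\CFKa(K)$ at Alexander grading strictly less than $\tau(K)$, contradicting the defining minimality of $\tau(K)$. Combined with the minimum value $\tfrac{pn(p-1)}{2}$ of $s(I)$, this gives the lower bound. The equality case when $\tau(K)=g(K)$ is then immediate: the lower bound reads $\tau(K_{p,pn+1})\geq pg(K)+\tfrac{pn(p-1)}{2}$, while the universal inequality $\tau(L)\leq g(L)$ together with the classical Seifert genus formula $g(K_{p,pn+1})=pg(K)+\tfrac{pn(p-1)}{2}$ (for $pn+1>0$) supplies a matching upper bound, forcing equality.

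The step I expect to be the main obstacle is making the generator decomposition and Alexander-grading formula sufficiently rigid at the extremal filtration levels to pin down the surviving class in $\HFa(S^3)$. The $(p-1)$ gap between the two bounds reflects a genuine ambiguity in the labels $I$ near the top of the cabling filtration: the differential on the cable complex is not diagonal in the decomposition there, and without additional geometric input it cannot in general be forced to kill lower-grading pairings. Under the hypothesis $\tau(K)=g(K)$, the genus bound $\tau\le g$ on the cable collapses this ambiguity from the outside, which is why equality is available in that case but not in general.
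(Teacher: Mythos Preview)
The paper does not prove this statement. It is quoted verbatim as Theorem~1.2 of \cite{CableII} and invoked as a black box in the proof of Theorem~\ref{thm:tightness}; there is no argument in the present paper against which to compare your proposal.

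For what it is worth, your outline is in the spirit of the approach in \cite{Cable,CableII}: one does construct a doubly-pointed Heegaard diagram for the cable by winding inside a neighborhood of $K$, and the generators do admit a decomposition of roughly the type you describe, with Alexander grading behaving multiplicatively in the $K$-factor. As you yourself flag, the substantive difficulty is controlling the differential near the extremal filtration levels well enough to identify which class survives to generate $\HFa(S^3)$; this is precisely the content of \cite{CableII}, and your sketch does not supply it. So what you have written is a plausible plan, not a proof.

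One genuine issue: your derivation of the equality case relies on the Seifert genus formula $g(K_{p,pn+1})=pg(K)+\tfrac{pn(p-1)}{2}$, which you correctly note holds only for $pn+1>0$. The theorem as stated is asserted for all $n$. If $pn+1<0$ the genus of the cable is $pg(K)+\tfrac{(p-1)(-pn-2)}{2}$, and the bound $\tau\le g$ no longer meets the lower bound. Either the equality clause is meant only for $pn+1>0$ (which is all the present paper uses, applying it at $n=0$), or a different argument is needed for negative $n$; your proposal does not address this.
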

\noindent {\bf Proof of Theorem \ref{thm:tightness}:} 

Recall that Proposition \ref{prop:msum} shows that $K_{p,q}=K_{p,\text{sgn}(q)} * T_{p,q}$.   Torisu's theorem then implies that  $\xi_{K_{p,q}}\simeq \xi_{K_{p,\text{sgn}(q)} }\# \xi_{{T_{p,q}}}$.  In the case that $q<-1$, $\xi_{{T_{p,q}}}$ is overtwisted, as indicated by the fact that $h(\xi_{{T_{p,q}}})=(p-1)(-q-1)$.  Thus $K_{p,q}$ is overtwisted, provided $q<-1$.   Likewise, if $q\ge 1$ then $\xi_{T_{p,q}}\simeq \std$.  Thus $$\xi_{K_{p,q}}\simeq \xi_{K_{p,1} }\# \std \simeq \xi_{K_{p,1}}$$
whenever $q\ge 1$.   

In light of these remarks, it suffices to understand $\xi_{K_{p,\pm1}}$.  

Assume first that $\xi_K\simeq \std$.  Proposition \ref{prop:SQPfiber} indicates that $\tau(K)=g(K)$.  The second part of Theorem \ref{thm:tau} tells us that (for  $n=0$)
$$  \tau(K_{p,1}) =  pg(K)= g(K_{p,1}).$$
\noindent Hence, $\xi_{K_{p,1}}$ is tight, by Proposition \ref{prop:SQPfiber}.  On the other hand,  Theorem \ref{thm:cablehopf} implies
$$  h(\xi_{K_{p,-1}}) =  h(\xi_K)+2g(K)(1-p)=2g(K)(1-p)\ne 0,$$
\noindent  Where the second equality follows from the fact that $\xi_K\simeq \std$, by assumption, and hence has vanishing Hopf invariant. Thus, $\xi_{K_{p,-1}}$ is overtwisted.  This proves Theorem \ref{thm:tightness} in the case that $\xi_K$ is tight. 
 
\begin{figure}[htb]
\begin{center}
\includegraphics{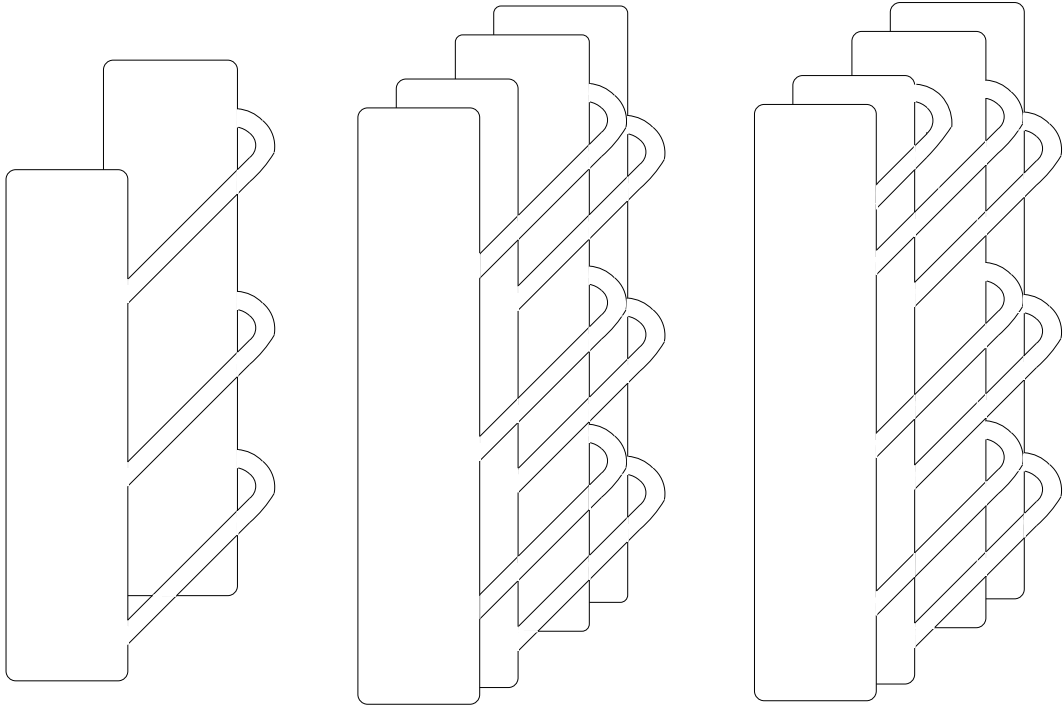}
\caption{Quasipositive Seifert surfaces are surfaces obtained, like the ones pictured here, by stacking parallel disks and attaching positive bands (here, positive means that the bands turn upwards on the right).  This figure depicts the manner in which we can cable a quasipositive Seifert surface.  The left figure is a quasipositive Seifert surface, $F$, for the trefoil.  The middle represents the first surface together with a parallel push-off in a product neighborhood of $F$.  The middle surface is clearly quasipositive and its boundary is the $(2,0)$ cable link of the trefoil.  The third surface is obtained by adding a positive band to the second, and is a quasipositive Seifert surface for the $(2,1)$ cable of the trefoil.}
\label{fig:SQPcable}
\end{center}
\end{figure}

  If $\xi_K$ is not tight, then $\tau(K)< g(K)$ by Proposition \ref{prop:SQPfiber}  (note that $\tau(K)\le g(K)$ by the adjunction inequality for knot Floer homology, Theorem $5.1$ of \cite{Knots}).   Theorem \ref{thm:tau} then shows that $$\tau(K_{p,1})\le p\tau(K)+p-1 < pg(K) = g(K_{p,1}).$$
\noindent Hence $\xi_{K_{p,1}}$ is not tight, by Proposition \ref{prop:SQPfiber}.  To deal with $K_{p,-1}$, note that $K_{p,1}$ can be changed into $K_{p,-1}$ by a sequence of $(p-1)$ crossing changes, each of which change a positive to crossing to a negative.  Corollary $1.5$ of \cite{FourBall} indicates that $\tau(K)$ cannot increase under such crossing changes, and hence
$$\tau(K_{p,-1})\le \tau(K_{p,1})  <  g(K_{p,-1}).$$
\noindent Thus $\xi_{K_{p,-1}}$ is not tight.  This completes the proof. $\Box$

\newpage

\noindent {\bf Proof of Corollary \ref{cor:complex}:} 

The corollary follows immediately from Theorem \ref{thm:tightness} and Proposition \ref{prop:SQPfiber}.  Indeed, Proposition \ref{prop:SQPfiber} characterizes fibered knots which induce $\std$ as precisely those for which the fiber surface is isotopic to a piece of complex curve.  Theorem \ref{thm:tightness} then determines which cable knots induce $\std$. $\Box$
\bigskip

Though we have proved the theorem and corollary, we think it is interesting to note that if $K$ has a quasipositive Seifert surface, then a quasipositive surface for $K_{p,q}$ can be explicitly constructed for $q>0$.  Since quasipositive surfaces are isotopic to pieces of smooth complex curves in the four-ball \cite{Rudolph}, this indicates that positive cabling is, in some sense, a complex operation.   

To construct the aforementioned surfaces, we simply observe that we can ``cable" a quasipositive surface, $F$, by taking $p$ copies of each disk and band used to construct $F$.    See Figure \ref{fig:SQPcable} for an illustration.  More precisely, we consider a neighborhood, $F\times [0,1]$, of $F\subset S^3$ and take the disjoint surfaces $F\times \{x_i\}$ for $p$ distinct points $x_i\in [0,1]$, $i=1,..,p$.  The boundary of the resulting (quasipositive) surface is the $(p,0)$ cable link on $K$.  To obtain the positive cable knots, we simply attach some more positive bands.  Note that this method proves the ``if" direction of Theorem \ref{thm:tightness} without appealing to the second part of Theorem \ref{thm:tau}.  However, we could not obtain a general proof of the  ``only if" direction which avoided the use of Theorem \ref{thm:tau}.

\end{document}